\numberwithin{equation}{section}
\newtheorem{theorem}{Theorem}[section]
\newtheorem{lemma}[theorem]{Lemma}
\newtheorem{proposition}[theorem]{Proposition}
\newtheorem{corollary}[theorem]{Corollary}
\theoremstyle{definition}
\newtheorem{definition}[theorem]{Definition}
\newtheorem{remark}[theorem]{Remark}
\newtheorem{remark and definition}[theorem]{Remark and Definition}
\newtheorem{remark and notation}[theorem]{Remark and Notation}
\newtheorem{question}[theorem]{Question}
\renewcommand{\theacknowledgement}{}
\newcommand\Proj{\operatorname{Proj}}
\newcommand\Spec{\operatorname{Spec}}
\newcommand\Hom{\operatorname{Hom}}
\newcommand\Ext{\operatorname{Ext}}
\newcommand\Tor{\operatorname{Tor}}
\newcommand\height{\operatorname{height}}
\newcommand\depth{\operatorname{depth}}
\newcommand\codim{\operatorname{codim}}
\newcommand\Sym{\operatorname{Sym}}
\newcommand\beg{\operatorname{beg}}
\newcommand\reg{\operatorname{reg}}
\newcommand\Ker{\operatorname{\Ker}}
\newcommand\Coker{\operatorname{Coker}}
\newcommand\e{\operatorname{e}}
\newcommand\pd{\operatorname{pd}}
\newcommand\id{\operatorname{id}}
\newcommand\Gid{\operatorname{Gid}}
\newcommand\cdim{\operatorname{CI-dim}}
\newcommand\gdim{\operatorname{G-dim}}
\newcommand\Supp{\operatorname{Supp}}
\newcommand\Rad{\operatorname{Rad}}
\newcommand\Ass{\operatorname{Ass}}
\newcommand\Ann{\operatorname{Ann}}
\newcommand\Nor{\operatorname{Nor}}
\newcommand\Sec{\operatorname{Sec}}
\newcommand\NZD{\operatorname{NZD}}
\newcommand\Sing{\operatorname{Sing}}
\newcommand\rank{\operatorname{rank}}
\newcommand{\xx}{\underline x}
\newcommand{\yy}{\underline y}
\newcommand{\qism}{\stackrel{\sim}{\longrightarrow}}
\newcommand{\lqism}{\stackrel{\sim}{\longleftarrow}}
\date{}
\begin{document}

\title{On Coefficient Modules  of Arbitrary Modules}
\thanks{{Work
partially supported by CNPq-Brazil-Grants 309316/2011-1,
and FAPESP Grant 2012/20304-1}. 
{\bf Keywords:} Coefficient modules, Ratliff-Rush closure,
Integral closure, Fiber cone, Rees-Amao function, Saturation. 2000 Mathematics Subject Classification}
\maketitle


\centerline{}

\centerline{\bf {V. H. Jorge P\'{e}rez}}

\centerline{{\small vhjperez@icmc.usp.br}}

\centerline{Departamento de Matem\'{a}tica}

\centerline{ICMC - Instituto de Ci\^{e}ncias da Computa\c{c}\~{a}o e Matem\'{a}tica}

\centerline{USP - Universidade de S\~{a}o Paulo}

\centerline{}

\centerline{\bf {M. D. Ferrari}}

\centerline{{\small mdsilva@uem.br}}

\centerline{Departamento de Matem\'{a}tica}

\centerline{CCE - Centro de Ci\^{e}ncias Exatas}

\centerline{UEM - Universidade Estadual de Maring\'{a}}

\centerline{}

\centerline{\bf {P. H. Lima}}

\centerline{{\small apoliano108@gmail.com}}

\centerline{Departamento de Matem\'{a}tica}

\centerline{CCET - Centro de Ci\^encias Exatas e Tecnologias}

\centerline{UFMA - Universidade Federal do Maranh\~ao}








\newtheorem{rem}[theorem]{\quad Remark}


\begin{abstract}
Let $(R, \mathfrak{m})$ be a $d$-dimensional Noetherian local ring that is formally equidimensional, and let $M$ be an arbitrary $R$-submodule of the free module $F = R^p$ with an analytic spread $s:=s(M)$. In this work, inspired by Herzog-Puthenpurakal-Verma in \cite{herzog}, we show the existence of an unique largest $R$-module $M_{k}$ with $\ell_R(M_{k}/M)<\infty$ and
$M\subseteq M_{s}\subseteq\cdots\subseteq M_{1}\subseteq
M_{0}\subseteq q(M),$ such that $\deg(P_{M_{k}/M}(n))<s-k,$ where $q(M)$ is the relative integral closure of $M,$ defined by $q(M):=\overline{M}\cap M^{sat},$ where $M^{sat}=\cup_{n\geq
1}(M:_F\mathfrak{m}^n)$ is the saturation of $M$. We also provide a structure theorem for these modules. Furthermore, we establish the existence of coefficient modules between $I(M)M$ and $M$, where $I(M)$ denotes the $0$-th Fitting ideal of $F/M$, and discuss their structural properties. Finally, we present some applications and discuss some properties.
\end{abstract}



\section{Introduction}
Let $(R,\mathfrak{m})$ be a commutative Noetherian local ring of Krull dimension $d$, and let $I$ be an $\mathfrak{m}$-primary ideal of $R$ with its integral closure denoted by $\overline{I}$. The classical result known as the Hilbert-Samuel theorem says that for sufficiently large integer $n$, the length of $R/I^n$  is a polynomial in variable $n$ of degree $d$. This polynomial is well known as the Hilbert-Samuel polynomial of $I$ and can be expressed as:
$$
\sum^{d}_{i=0}(-1)^i{\rm e}_i(I)\left(
\begin{array}{c}
{n+d-i-1}\cr d-i
\end{array}
\right),
$$
\noindent where ${\rm e}_i(I)$ is called the $i$-th Hilbert-Samuel coefficient of $I$ for $i=1,\ldots,d$ and ${\rm e}_0(I)$ is the Hilbert-Samuel multiplicity of $I$. In this context, we have the following classical result known as Rees multiplicity theorem (see \cite{northcott} and Kirby-Rees \cite{rees}): If $R$ is formally equidimensional, then the integral closure $\overline{I}$ of $I$ can be characterized as the unique largest ideal of $R$ containing $I$ and having the same Hilbert-Samuel multiplicity as $I$, i.e., ${\rm e}_0(I)={\rm e}_0(\overline{I})$. It should be noted that this result is extensively used in Algebraic Geometry, Singularity Theory and Commutative Algebra (see \cite{kleiman}). Given the importance of this result, it is natural to ask the following question:

\begin{question}\label{Question1}
Let $(R,\mathfrak{m})$ be a Noetherian formally equidimensional local ring. Is there a chain of ideals between $I$ and $\overline{I}$ with a property similar to Rees multiplicity theorem? If so, there exists a colon structure for these ideals?
\end{question}

Motivated by this question or Rees' theorem, Shah establishes in \cite{shah}  the existence of a chain of unique largest ideals $I_k$, with $0\leq k\leq d$, between the ideals $I$ and $\overline{I}$. Each $I_k$ ideal, called Coefficient Ideal, have the property of preserving the first $k+1$ Hilbert-Samuel coefficients of $I$. In other words, there exists ideals $I_k$ satisfying the following inclusions: $I \subset I_d \subset \cdots \subset I_1 \subset I_0 = \overline{I}$ with ${\rm e}_i(I)={\rm e}_i(I_{k})$, for $i=0,\ldots,k$. The $k$-th ideal coefficient $I_k$ represents the $k$-th step in this chain. Moreover, Shah determined a specific colon ideal structure for each coefficient ideal $I_k$ of $I$. He showed that, for $k=1,\ldots,d$, the ideal $I_k$ is determined by $I_k = (I^{n_0+1}:x_1,\ldots,x_k)$, where $n_0\geq1$ is a fixed integer and $(x_1,\ldots,x_d)$ is a fixed minimal reduction of $I^{n_0}$, for some integer $n_0$. Additionally, Shah proved that the Ratliff-Rush closure of this same ideal, denoted as $\widetilde{I}:=\cup_{n\geq1}(I^{n+1}:I^n)$, coincides with the $d$-th ideal coefficient, i.e., $I_d = \widetilde{I}$. 

The importance of the Ratliff-Rush closure and coefficient ideals cannot be underestimated. For instance, the Ratliff-Rush filtration $\{(\widetilde{I})^n\}_{n\in\mathbb{N}}$ has proven to be a very useful tool for studying numerical invariants of the associated graded ring $G_I(R)=\oplus_{n\geq0}I^n/I^{n+1}$ (see Rossi-Valla, \cite{rossi2}, and the references therein). Rossi-Valla used the Ratliff-Rush closure of an ideal crucially in their proof of the Sally conjecture. Blancafort also employed the Ratliff-Rush closure of an ideal and the Hilbert-Samuel function of a good filtration to generalize the works of Huckaba-Marley in \cite{huckaba} and Rossi-Valla in \cite{rossi}.

On the other hand, coefficient ideals also hold their importance. For instance, Ciuperc\u{a} described the $S_2$-ification of the extended Rees algebra of a primary $\mathfrak{m}$-ideal $I$ through the first coefficient ideal of all powers of $I$ in \cite{ciuperca}. Furthermore, assuming that $I$ is a $0$-dimensional monomial ideal in characteristic zero, Polini-Ulrich-Vitulli used the fact that the kernel of an ideal can be related to components of the graded canonical module of the extended Rees algebra to prove that the kernel of $I$ coincides with the kernel of any ideal between $I$ and the first coefficient ideal $I_{1}^R$ of $I$ in \cite{polini}. Moreover, the properties of the first coefficient ideals were extensively described by Heinzer-Johnston-Lantz, Heinzer-Johnston-Lantz-Shah, Heinzer-Lantz, and Shah in \cite{heinzer, heinzer2, heinzer3, shah}, respectively.

We can observe that the above results were obtained for $\mathfrak{m}$-primary ideals, but Question \ref{Question1} was also answered for arbitrary ideals: Let $J\subset I$ be ideals of $R$ with $\ell_R(I/J)<\infty$. Amao in \cite{amao} and Rees in \cite{rees2} show that the numerical function $h_{I/J}(n):=\ell_R(I^n/J^n)$ is a polynomial function $P_{I/J}(n)$ of degree at most $d$. We refer to $h_{I/J}(n)$ and $P_{I/J}(n)$ as the Rees-Amao function and the Rees-Amao polynomial of the pair $(J,I)$, respectively. The Rees multiplicity theorem has been generalized for ideals that are not $\mathfrak{m}$-primary by several authors. For instance, Rees showed in \cite{rees2} that if $(R,\mathfrak{m})$ is formally equidimensional, then $J$ is a reduction of $I$ if and only if $\deg(P_{I/J}(n))<d$.

Analogously to the results obtained by Shah in \cite{shah} for the case of $\mathfrak{m}$-primary ideals, Herzog-Puthenpurakal-Verma proved in \cite[Theorem 4.4]{herzog} that coefficient ideals always exist for arbitrary ideals. More specifically, if $(R,\mathfrak{m})$ is a formally equidimensional local ring and $I$ is an ideal of $R$, then for each $k=1,\ldots,s$, there exists a unique largest ideal $I_{k}$ with $\ell_R(I_{k}/I)<\infty$, and 
\[
I\subseteq I_{s}\subseteq I_{s-1}\subseteq\cdots\subseteq I_{1}\subseteq q(I), \mbox{ such that } \deg\left(P_{I_{k}/I}(n)\right)<s-k
\]
where $q(I)$ is the relative integral closure of $I$, and $s := \dim F(I)=\dim(\oplus_{n\geq 0}I^n/\mathfrak{m}I^n)$ is the analytic spread  of $I$, where $F(I)$ is the fiber cone of $I$,

It should be noted that Herzog-Puthenpurakal-Verma showed the existence of ideal coefficients for arbitrary ideals, but they did not demonstrated the colon structure of these ideals. However, in \cite{CPF}, the authors determined the colon structure of each ideal coefficient found by Herzog-Puthenpurakal-Verma. 

Finally, as we can see, the ideal coefficients have been extensively explored by many researchers. Therefore, it is natural to wonder if these results can be extended to modules. To introduce this context, we recall some well-known facts.

Let $M$ be a finitely generated $R$-module contained in a free $R$-module $F$ of positive rank $p>0$ with finite colength, denoted as $\ell_{R}(F/M)<\infty$. Using the notation described in Section \ref{section2}, Buchsbaum-Rim showed that for sufficiently large values of $n$, the length $\ell_R(F^n/M^n)$ can be expressed as a polynomial in $n$ of degree $d+p-1$, given by
$$P_M^F(n)=\sum\limits_{i=0}^{d+p-1}{(-1)}^i{\rm e}_i^F(M)\left(\begin{array}{c}n+d+p-i-2\cr d+p-i-1\end{array}\right),$$
where ${\rm e}_i^F(M)$ represents the $i$-th Buchsbaum-Rim coefficient of $M$ in $F$ for $i=0,\dots,d+p-1$. In particular, ${\rm e}_0^F(M)$ is called the Buchsbaum-Rim multiplicity of $M$ in $F$ (for more detailed see \cite{buchsbaum}). 

Similar to the case of ideals, there is also the Rees' multiplicity theorem for modules. If $R$ is formally equidimensional, then the integral closure $\overline{M}$ of $M$ in $F$ can be characterized as the largest submodule of $R$ that contains $M$ and has the same Buchsbaum-Rim multiplicity as $M$, i.e., ${\rm e}_0^F(M)={\rm e}_0^F(\overline{M})$ \cite{rees}. The Buchsbaum-Rim multiplicity has been extensively studied by numerous authors in the fields of Commutative Algebra, Algebraic Geometry, and Singularity Theory (e.g., \cite{buchsbaum, rees, Katz, kleiman}, \cite[p. 317]{huneke} and  \cite[p. 418, Remark 8.9]{vasconcelos}). Given the importance of this result, it is natural to ask the following:

\begin{question}
Let $(R,\mathfrak{m})$ be a Noetherian formally equidimensional ring, and let $M$ be an $R$-submodule of $F$. Is there a chain of modules between $M$ and $\overline{M}$ with a property similar to Rees's multiplicity theorem? If so, there exists a colon structure for these modules?
\end{question}

We can observe that this question is less explored compared to the ideal case, but there have already been some partial answers. For instance, Liu \cite{jung} provides the following result: If $R$ is a domain formally equidimensional ring, and $M$ is a finitely generated $R$-module such that $F/M$ has finite length, then there exists a unique chain 
$$M \subseteq M^F_{\{d+p-1\}} \subseteq \cdots \subseteq M^F_{\{1\}} \subseteq M^F_{\{0\}} \subseteq \overline{M},$$
such that ${\rm e}_i^F(M) = {\rm e}_i^F(M^F_{\lbrace k\rbrace})$, for all $i = 0,\ldots,k$. The module $M^F_{\lbrace k\rbrace}$ is called the Coefficient Module of $M$. On the other hand, in \cite{Perez-Ferrari1}, the P\'{e}rez and Ferrari showed the same result as Liu, but without assuming that $R$ is a domain. Furthermore, they provided the colon structure of these modules, which constitutes a generalization of one of the main results given by Shah (\cite[Theorem 2 and 3]{shah}).

The main aim of this paper is to establish a generalization of a result originally presented by Herzog-Puthenpurakal-Verma in \cite[Theorem 4.4]{herzog}. Precisely, we show that exists an unique largest module $M_{k}$ with $\ell_R(M_{k}/M)<\infty$ and
$M\subseteq M_{s}\subseteq\cdots\subseteq M_{1}\subseteq
M_{0}\subseteq q(M),$ such that $\deg(P_{M_{k}/M}(n))<s-k,$ where $q(M)$ is the relative integral closure of $M,$ defined by $q(M):=\overline{M}\cap M^{sat},$ where $M^{sat}=\cup_{n\geq
1}(M:_F\mathfrak{m}^n)$ is the saturation of $M$ and $s$ is the analytic spread of $M$ (see Theorem \ref{teoralgcoefarb}). Additionally, we also provide the colon structure of these coefficient modules (see Theorems \ref{teorcoefestruturaparmod},\ref{teorcoefestruturafinaparmod}). Furthermore, our results also generalizes the results given by Liu in \cite{jung} and Perez-Ferrari in \cite{Perez-Ferrari1}.

Now let $I(M)$ denote the $0$-th Fitting
ideal $\text{Fitt}_0(F/M)$ of $F/M$. In \cite{Katz2}, Katz and Kodiyalam defined the associated graded ring with respect to the module $M$ to be the
$R/I(M)$-algebra
$$G_{I(M)}(\mathscr{R}(M)):=\frac{\mathscr{R}(M)}{I(M)\mathscr{R}(M)} = \frac{R}{I(M)} \oplus \frac{M}{I(M)M} \oplus \frac{M^2}{I(M)M^2} \oplus \cdots.$$ In the case $p=1$, that is, if $F=R$ is a ring and $M=I$ is an ideal of $R$, then $I(M)=I$ and $G_{I(M)}(\mathscr{R}(M))=G_I(R)$, the associated graded ring of
the ideal $I$ of $R$.
Another aim of this paper is to
give a new result that shows a way to control the height of the associated prime ideals of $G_{I(M)}(\mathscr{R}(M))$ (see Theorem \ref{height}), providing a generalization of one main results of Shah \cite[Theorem 4]{shah}. To do that, we establish the existence of another coefficient modules between the $R$-modules $I(M)M$ and $M$, denoted by $M_{[k]}$ and which we call the coefficient module of the associated graded ring $G_{I(M)}(\mathscr{R}(M))$ (see Section \ref{CoefAss}). Additionally, we give a colon structure of these coefficient modules in Theorem \ref{structure1}.

The organization of this paper is as follows: In section 2, we introduce the notation, definitions, and some necessary known results that will be used. 

In Sections 3 and 4, we show the main results of this work. In Section 3, we start by proving the existence of coefficient modules between $M$ and $q(M)$. Additionally, we provide an algebraic characterization of the coefficient modules in terms of colon modules. In section 4, we show the existence of coefficient modules between $I(M)M$ and $M$, and give structural properties for them.

In the last section, we provide some applications and properties by making use of the results presented in Sections 3 and 4.

\section{Setup and Background}

Let $(R,\mathfrak{m})$ be a Noetherian local ring, $M$ a finitely generated $R$-module which is contained in a free $R$-module $F$ of positive rank. The embedding $M\subseteq F$ induces the graded $R$-algebra homomorphism between the symmetric algebras
$${\rm Sym}(i):{\rm Sym}_R(M)\longrightarrow {\rm Sym}_R(F)$$
of $M$ and $F$. As $F$ is a free $R$-module, the symmetric algebra $S(F):={\rm Sym}_R(F)$ of $F$ concides with the polynomial ring $S=R[t_1,t_2,\dots,t_p]$ over $R$, where $p={\rm rank}_R(F)>0$. The authors Simis-Ulrich-Vasconcelos in \cite{vasconcelos}, defined the Rees algebra $\mathscr{R}(M)$ of the module $M$ as the image of the induced homomorphism;
$$\mathscr{R}(M) = {\rm Im}\left({\rm Sym}(i)\right) = \bigoplus_{n\geq0} M^n \subseteq S(F)= R[t_1,t_2,\ldots,t_p] $$
where $M^n:=[\mathscr{R}(M)]_n$ stand for the homogeneous component of $\mathscr{R}(M)$ of degree $n$, i. e., is the $n$-th power of the image of $M$ in $\mathscr{R}(M)$. In particular, $M=[\mathscr{R}(M)]_1$ is an $R$-submodule of $\mathscr{R}(M)$. We denote $I_M$ the ideal of $S$ generated by $[\mathscr{R}(M)]_1$. So the fiber cone of ideal ${I_M}$ is defined as $\mathscr{F}({I_M}):=\oplus_{n\geq 0} I_M^n/\mathscr{M} I_M^n$, where $ \mathscr{M}=\mathfrak{m}\oplus \bigoplus_{n>0}F^n$ is the only maximal homogeneous ideal of $S.$
 
As in the ideal case, we may define the fiber cone (or special fiber) of an  $R$-module $M$ as following $\mathscr{F}(M):={\mathscr{R}(M)}/\mathfrak{m}{\mathscr{R}(M)}.$ The analytic spread of $M$ is defined by $s:=s(M)=\dim(\mathscr{F}(M))$.

Throughout the paper the $R$-modules will be contained in a free $R$-module $F$ of finite rank $p> 0.$

The next result gives a characterization for a sequence to be a minimal reduction of powers of a module.

\subsection{Some properties of reduction}\label{section2}

An $R$-submodule $N$ of $M$ is called {\it reduction} of $M$ if there exists an $n_0>0$ such that $M^{n+1}=NM^n \mbox{ for all } n\geq n_0.$ A reduction $N\subseteq M$ is said to be a minimal reduction of $M$ if $N$ properly contains no further reductions of $M$. If the $R$-submodule $N$ is a reduction of the $R$-submodule $M$, then there exists at least one $R$-submodule $L$ in $N$ such that $L$ is a minimal reduction of $M$. This last statement follows exactly from the same deduction as the proof of Huneke-Swanson in \cite[Theorem 8.3.5]{huneke}.

Let us recall the definition of the integral closure of modules. For every integer $n\geq0$, we define the integral closure
$$\overline{M^n}=\left[\overline{\mathscr{R}(M)}^{{\rm Sym}(F)}\right]_n\subseteq S_n(F)=F^n$$
of $M^n$ to be the $n$-th homogeneous component of the integral closure $\overline{\mathscr{R}(M)}^{{\rm Sym}(F)}$ of $\mathscr{R}(M)$ in ${\rm Sym}(F)$. In
other words, $\overline{M^n}$ is the integral closure of the ideal $(M{\rm Sym}(F))^n$ of degree $n.$ In particular, $\overline{M}=(\overline{M{\rm Sym}(F)})_1\subseteq F.$ Hence $\overline{M}$ consists of the element $x\in F$ which satisfies the
integral equation $x^n+c_1x^{n-1}+\cdots+c_{n-1}x+c_n=0$ in ${\rm Sym}(F)$ where $n>0$ and $c_i \in M^i$ for every $1\leq i\leq n.$


\noindent {\bf Saturation and reduction}: Let $(R,\mathfrak{m})$ be a Noetherian local ring, $M$ a finitely generated $R$-submodule  of $F$. The saturation $M^{sat}$ of $M$ with respect to $\mathfrak{m}$  is the $R$-submodule of $F$ defined by
$$
M^{sat}=\bigcup_{n\in N}\left(M:_F\mathfrak{m}^{n}\right)
=\left\{x\in F\ |\ x\mathfrak{m}^n\subseteq M,\ \textrm{for some}\
n\in \mathbb{N} \right\}
$$
and the relative integral closure $q(M)$ of $M$ is defined by $q(M):=\overline{M} \cap M^{{sat}}$.

The saturation and the relative integral closure are $R$-modules which contains $M$. 

Since $M\subseteq (M:_F\mathfrak{m}) \subseteq (M:_F\mathfrak{m}^{2}) \subseteq (M:_F\mathfrak{m}^{3})\subseteq \cdots $ and since $F$ is a Noetherian $R$-module, there exists an integer $k$ such that $(M:_F\mathfrak{m}^{k+1})=(M:_F\mathfrak{m}^{k})$. This implies that $\ell_R(M^{{sat}}/M)<\infty$. 

\begin{lemma}\label{propproperties} Let $(R,\mathfrak{m})$ be a Noetherian local ring, $M$ a finitely generated $R$-module which is contained in $F$. Then, the relative integral closure  $q(M)$ of $M$ satisfies the following properties:
\begin{enumerate}
\item[(i)] $M$ is a reduction of $q(M)$ and $\ell_R(q(M)/M)<\infty$.
\item[(ii)] If $M$ is a reduction of $N$ with $\ell_R(N/M)<\infty$ then $N\subseteq q(M)$.
\item[(iii)] If $N\subseteq M$ then $q(N)\subseteq q(M)$.
\item[(iv)] $q(q(M))=q(M)$.
\end{enumerate}
\end{lemma}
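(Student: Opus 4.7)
The plan is to treat each of the four statements essentially independently, leaning on three standard background facts: (a) $M$ is a reduction of $N$ iff $M\subseteq N$ and $N\subseteq\overline{M}$ (equivalently $\overline{M}=\overline{N}$); (b) integral closure is idempotent, $\overline{\overline{M}}=\overline{M}$; and (c) saturation $(-)^{sat}$ is monotone and stabilizes after finitely many colons, giving $\ell_R(M^{sat}/M)<\infty$, a fact already recorded in the paragraph preceding the lemma.

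For part (i), I would start from the chain $M\subseteq q(M)\subseteq\overline{M}$. Applying integral closure across this chain and using idempotence gives $\overline{q(M)}=\overline{M}$, which by (a) says exactly that $M$ is a reduction of $q(M)$. The finite-length claim is then immediate from $q(M)\subseteq M^{sat}$ combined with $\ell_R(M^{sat}/M)<\infty$. For part (ii), I would handle the two containments $N\subseteq\overline{M}$ and $N\subseteq M^{sat}$ separately: the first is fact (a) applied to the hypothesis that $M$ is a reduction of $N$; the second uses $\ell_R(N/M)<\infty$, which forces $\mathfrak{m}^n(N/M)=0$ for some $n$, so $\mathfrak{m}^n N\subseteq M$ and hence $N\subseteq(M:_F\mathfrak{m}^n)\subseteq M^{sat}$. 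Intersecting gives $N\subseteq q(M)$.

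For part (iii), monotonicity of integral closure gives $\overline{N}\subseteq\overline{M}$, and for saturation one observes directly that if $\mathfrak{m}^n x\subseteq N$ then $\mathfrak{m}^n x\subseteq M$, so $N^{sat}\subseteq M^{sat}$; intersecting yields $q(N)\subseteq q(M)$. For part (iv), the inclusion $q(M)\subseteq q(q(M))$ is formal from $q(M)\subseteq\overline{q(M)}$ and $q(M)\subseteq q(M)^{sat}$. For the reverse inclusion, I would write $q(q(M))=\overline{q(M)}\cap q(M)^{sat}$ and show each factor lies in the corresponding factor of $q(M)$: idempotence of integral closure (applied to $M\subseteq q(M)\subseteq\overline{M}$) gives $\overline{q(M)}=\overline{M}$; and if $x\in q(M)^{sat}$, pick $n$ with $\mathfrak{m}^n x\subseteq q(M)\subseteq M^{sat}$, then use that $\mathfrak{m}^n x$ is finitely generated to choose a single exponent $m$ with $\mathfrak{m}^m(\mathfrak{m}^n x)\subseteq M$, whence $x\in M^{sat}$.

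The arguments are short and the only subtle point is the finite-generation step in (iv): one must turn the pointwise annihilation defining $M^{sat}$ into a uniform annihilation, which is fine because $\mathfrak{m}^n x$ is generated by finitely many elements in a Noetherian setting. Beyond that the proofs are essentially set-theoretic bookkeeping on top of facts (a)--(c), so I do not expect a real obstacle; the main care is simply to invoke idempotence of $\overline{(-)}$ in the right place when proving (i) and (iv).
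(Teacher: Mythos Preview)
Your proposal is correct and parts (i)--(iii) are essentially identical to the paper's proof. The only real difference is in (iv): you argue directly by computing $\overline{q(M)}=\overline{M}$ and $q(M)^{sat}\subseteq M^{sat}$, whereas the paper instead observes that $K=q(q(M))$ satisfies the hypotheses of (ii) with respect to $M$ (since $q(M)$ a reduction of $K$ implies $M$ is, and finite length is transitive via (i)), so $K\subseteq q(M)$ drops out immediately. The paper's route is slicker because it recycles (i) and (ii) and avoids the finite-generation bookkeeping you flagged; your direct computation is a bit longer but perfectly valid and perhaps more transparent about why saturation is well-behaved under iteration.
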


\begin{proof}
(i) Since $M\subseteq q(M)\subseteq \overline{M}$, we get $M$ is a reduction of $q(M)$. Also, we have 
$\ell_R(q(M)/M)<\ell_R(M^{sat}/M)<\infty.$

(ii) Since $M$ is a reduction of $N$ we get $q(N)=\overline{N}\cap N^{sat}=\overline{M}\cap N^{sat}$. As $\ell_R(N/M)<\infty$, there is $k\in \mathbb{N}$ such that $\mathfrak{m}^kN\subseteq M$. Therefore $N^{sat}\subseteq M^{sat}$ and $N\subseteq q(N)\subseteq \overline{M}\cap M^{sat}=q(M)$.

(iii) Clear. 

 (iv) If $q(M)$ is a reduction of $K$ with $\ell_R(K/q(M))<\infty$ then $M$ is also a reduction of $K$ and $\ell_R(K/M)<\infty$. So by $(ii)$ we have $K\subseteq q(M)$. By taking $K=q(q(M))$ we get the desired result.

\end{proof}

\begin{lemma}\label{lemaferramentas} {\rm (\cite[Lemma 2.3]{Perez-Ferrari1})} Let $M$ be an $R$-module  and let $a_1,\ldots,a_s$ be elements of $M^{n_0}$ for some fixed $n_0>0$. We denote by $\overline{a_i}:=a_i+\mathfrak{m}M^{n_0}$, for $i=1,\ldots,s$. Then
$$\ell_{\frac{R}{\mathfrak{m}}}\left(\frac{\mathscr{F}(M)}{\left(\overline{a}_1,\ldots,\overline{a}_s\right)}\right)<\infty
\Longleftrightarrow\ 
\left(a_1,\ldots,a_s\right)\ \mbox{is a reduction of}\ M^{n_0}.$$
\end{lemma}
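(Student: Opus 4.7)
The plan is to exploit the graded structure of the fiber cone $\mathscr{F}(M)=\bigoplus_{n\geq 0}M^n/\mathfrak{m}M^n$, viewing each $\overline{a}_i$ as a homogeneous element of degree $n_0$, and to translate the finite length statement into the vanishing of graded components, which will then yield a reduction equality by Nakayama's lemma. The key identification to record first is that, for each $n\geq n_0$,
\[
\bigl[\mathscr{F}(M)/(\overline{a}_1,\ldots,\overline{a}_s)\bigr]_n\;\cong\;\frac{M^n}{(a_1,\ldots,a_s)M^{n-n_0}+\mathfrak{m}M^n},
\]
since $\overline{a}_i$ lives in degree $n_0$ and its image in degree $n$ is spanned by $a_i M^{n-n_0}$ modulo $\mathfrak{m}M^n$.

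For the $(\Leftarrow)$ direction, assuming $(a_1,\ldots,a_s)$ is a reduction of $M^{n_0}$, I would take $r_0$ with $M^{n_0(r+1)}=(a_1,\ldots,a_s)M^{n_0 r}$ for every $r\geq r_0$. Multiplying both sides by $M^k$ for $0\leq k<n_0$, and using the Rees-algebra identity $M^a\cdot M^b=M^{a+b}$, extends the equality to all degrees $\geq n_0(r_0+1)$. Hence every graded piece of $\mathscr{F}(M)/(\overline{a}_1,\ldots,\overline{a}_s)$ above that bound vanishes, and the remaining finitely many pieces are finite-dimensional $R/\mathfrak{m}$-vector spaces, so the total length is finite.

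For the converse, finite length of a graded $R/\mathfrak{m}$-module forces the vanishing of its graded pieces in all sufficiently high degrees. Specializing to degrees $n_0(r+1)$ yields $M^{n_0(r+1)}=(a_1,\ldots,a_s)M^{n_0 r}+\mathfrak{m}M^{n_0(r+1)}$ for $r\gg 0$. Applying Nakayama's lemma to the finitely generated $R$-module $M^{n_0(r+1)}/(a_1,\ldots,a_s)M^{n_0 r}$, which is finitely generated since $M^{n_0(r+1)}$ is an $R$-submodule of the free module $F^{n_0(r+1)}$ of finite rank, produces the honest equality $M^{n_0(r+1)}=(a_1,\ldots,a_s)M^{n_0 r}$ for some large $r$. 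Multiplying this by $M^{n_0}$ propagates it to all larger $r$ by induction, yielding the reduction of $M^{n_0}$ in the sense of the paper.

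The main obstacle is essentially bookkeeping rather than any deep difficulty: one must keep straight that the $\overline{a}_i$ are homogeneous of degree $n_0$ rather than degree $1$, that the sought reduction is of $M^{n_0}$ whose own powers are $M^{n_0 r}$, and that passing from a single-degree vanishing in the fiber cone to a global reduction relation requires both Nakayama and the multiplicative propagation by $M^{n_0}$. With these conventions in place, the argument is formally parallel to the classical characterization of reductions of ideals via the fiber cone.
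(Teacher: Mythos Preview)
Your argument is correct and follows the natural approach. The paper does not actually prove this lemma---it is cited from \cite{Perez-Ferrari1}---but your method coincides with the one the paper uses for the parallel statement Lemma~\ref{equ} concerning $G_{I(M)}(\mathscr{R}(M))$: identify the graded components of the quotient, use Nakayama's lemma to pass from the fiber-cone equality to a genuine equality of powers, and read off the reduction condition. One minor remark: once the high-degree graded pieces vanish, Nakayama applies separately in each such degree, so the reduction equality holds for all large $r$ directly and the final ``propagation by multiplying by $M^{n_0}$'' step is not needed (though it is harmless).
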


The next theorem, when the residue field is infinite, shows the existence of minimal reductions whose minimal generating sets have cardinality equal to the analytic spread of the $R$-module $M$. As consequence, the following corollary shows a condition, using minimal reduction, to an element belongs to the integral closure.

\begin{theorem} {\rm (\cite[Corollary 16.4.7]{huneke})} Let $(R,\mathfrak{m})$ be a Noetherian local ring with infinite residue field. Then all $R$-module $M$ with analytic spread $s(M)$ admits a reduction generated by $s(M)$ elements. Furthermore, this reduction is minimal.
\end{theorem}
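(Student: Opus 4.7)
The plan is to transport the problem to the fiber cone $\mathscr{F}(M)=\mathscr{R}(M)/\mathfrak{m}\mathscr{R}(M)$, which by definition is a standard graded algebra over the infinite residue field $k=R/\mathfrak{m}$, generated in degree one by the finite dimensional $k$-vector space $M/\mathfrak{m}M$, and of Krull dimension $s=s(M)$. The dictionary between the module $M$ and its fiber cone will be furnished, in both directions, by Lemma \ref{lemaferramentas}.

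The first step is to produce a homogeneous system of parameters for $\mathscr{F}(M)$ consisting of degree one elements. Since $k$ is infinite, a standard graded prime avoidance argument applied to the finitely many homogeneous primes of $\mathscr{F}(M)$ of height strictly less than $s$ yields elements $\bar x_1,\ldots,\bar x_s\in\mathscr{F}(M)_1$ such that the ideal $(\bar x_1,\ldots,\bar x_s)\mathscr{F}(M)$ is primary to the irrelevant maximal ideal; equivalently, $\mathscr{F}(M)/(\bar x_1,\ldots,\bar x_s)$ has finite length over $k$. This choice is possible precisely because a finite dimensional $k$-vector space over an infinite field cannot be covered by finitely many proper subspaces. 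Moreover, the same argument forces the $\bar x_i$ to be $k$-linearly independent in $\mathscr{F}(M)_1$: a nontrivial linear relation would allow one of them to be expressed in terms of the others, leaving fewer than $s$ degree one elements generating an ideal of finite colength in an $s$-dimensional algebra. Lift each $\bar x_i$ to $x_i\in M$; then Lemma \ref{lemaferramentas} with $n_0=1$ translates the finite length property of the fiber cone quotient into the statement that $N:=(x_1,\ldots,x_s)$ is a reduction of $M$, giving the first assertion.

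For minimality, suppose $L\subsetneq N$ were another reduction of $M$. By Nakayama's lemma, $L+\mathfrak{m}N\subsetneq N$, so the image of $L$ in $N/\mathfrak{m}N\cong k^s$ spans a $k$-subspace of dimension at most $s-1$. Linear independence of the $\bar x_i$ in $\mathscr{F}(M)_1$ ensures that the natural map $N/\mathfrak{m}N\to M/\mathfrak{m}M=\mathscr{F}(M)_1$ is injective, so the images of a generating set of $L$ in $\mathscr{F}(M)_1$ also span a subspace of dimension strictly less than $s$. Applying Lemma \ref{lemaferramentas} in the other direction to $L$ would then give an ideal of $\mathscr{F}(M)$ generated by fewer than $s$ degree one elements whose quotient has finite length, contradicting $\dim\mathscr{F}(M)=s$. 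Hence $N$ cannot be properly shrunk while remaining a reduction, so $N$ is a minimal reduction.

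The main obstacle, and the only step where the hypothesis on the residue field is genuinely used, is the extraction of a linear homogeneous system of parameters from $\mathscr{F}(M)_1$; in characteristic or cardinality arithmetic where $k$ is finite one cannot avoid the finitely many offending hyperplanes inside $\mathscr{F}(M)_1$, and the conclusion can fail. Everything else, including the minimality statement, is a formal consequence of the correspondence between reductions of $M$ and ideals of finite colength in $\mathscr{F}(M)$ generated in degree one, together with Nakayama's lemma.
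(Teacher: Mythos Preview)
The paper does not supply its own proof of this theorem; it merely quotes it from \cite[Corollary 16.4.7]{huneke}. Your argument is correct and is essentially the standard one: produce a linear Noether normalization of the fiber cone $\mathscr{F}(M)$ (possible because the residue field is infinite), translate back to a reduction of $M$ via Lemma \ref{lemaferramentas} with $n_0=1$, and deduce minimality from Krull's height theorem together with Nakayama's lemma. This is precisely the strategy in Huneke--Swanson, so there is nothing further to compare.
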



\begin{corollary} \label{coropropoeliminandoelred2} {\rm (\cite[Corollary 2.6]{Perez-Ferrari1})} Let $R$ be formally equidimensional Noetherian ring, $M$ an $R$-module and $s:=s(M)$. Let $x_1,\ldots,x_s$ form a minimal reduction of $M^{n_0}$ for some $n_0\geq 1$. Suppose $yx_i\in M^{n_0+1}$ for some $y\in S(F)$ and some $i\in \{1,\ldots,s\}.$ Then $y\in\overline{M}$.
\end{corollary}
Some general references on the subjects of this Section 2 are authored by Simis-Ulrich-Vasconcelos in \cite{vasconcelos} and some of these results were proved in Huneke-Swanson in \cite{huneke}.
\section{The modules coefficients between $M$ and $\overline{M}$}\label{CoeffMod}

In this section, we show the main results of this work. We begin by showing the existence of coefficient modules between $M$ and $q(M)$. Additionally, we provide an algebraic characterization of the coefficient modules in terms of colon modules. The results obtained in this section generalize the findings of Herzog-Puthenpurakal-Verma (\cite[Theorem 4.4]{herzog}), Liu (\cite{jung}), and Perez-Ferrari (\cite{Perez-Ferrari1}).

\begin{theorem}[Theorem of Existence for Arbitrary Modules]\label{teoralgcoefarb} Let $(R,\mathfrak{m})$ be a formally equidimensional and $d$-dimensional local ring with infinite residue field. Let $M$ be a finitely generated $R$-submodule of $F$ and  with analytic spread $s:=s(M)\geq 1$ and let $k \in \{1,\ldots,s\}$. Then there exist unique largest 
$R$-submodules $M_{k}$ of $F$ with $\ell_R(M_{k}/M)<\infty$ and
$$M\subseteq M_{s}\subseteq\cdots\subseteq M_{1}\subseteq 
M_{0}=q(M)$$
such that ${\rm deg} (P_{M_{k}/M}(n))<s-k$ for $k=1,\ldots,s$.
\end{theorem}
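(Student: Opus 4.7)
My plan is to follow the template of \cite[Theorem 4.4]{herzog}, replacing each ideal-theoretic step by its module-theoretic analogue from Section 2: the Rees algebra $\mathscr{R}(M)\subseteq S(F)$, the Rees-Amao polynomial $P_{N/M}(n):=\ell_R(N^n/M^n)$, the Rees-type reduction criterion (Corollary \ref{coropropoeliminandoelred2}), and the structural properties of $q(M)$ in Lemma \ref{propproperties}. For each $k\in\{1,\dots,s\}$ I would set
$$\mathcal{F}_k:=\left\{N\subseteq F \;:\; M\subseteq N,\ \ell_R(N/M)<\infty,\ \deg P_{N/M}(n)<s-k\right\}$$
and define $M_k:=\sum_{N\in\mathcal{F}_k} N$. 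The condition $\deg P_{N/M}(n)<s$ (automatic when $k\ge 1$) together with the module analogue of Rees' criterion forces $M$ to be a reduction of $N$, so by Lemma \ref{propproperties}(ii) every $N\in\mathcal{F}_k$ sits inside $q(M)$; since $\ell_R(q(M)/M)<\infty$ by Lemma \ref{propproperties}(i), $\mathcal{F}_k$ contains only finitely many distinct submodules and the defining sum is well-defined.

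The crux is the closure property: if $N_1,N_2\in\mathcal{F}_k$, then $N_1+N_2\in\mathcal{F}_k$. To prove it I would consider the graded $\mathscr{R}(M)$-modules $E_i:=\bigoplus_{n\ge 0}N_i^n/M^n$, $E_{\cap}:=\bigoplus_{n\ge 0}(N_1^n\cap N_2^n)/M^n$, $E_{+}:=\bigoplus_{n\ge 0}(N_1^n+N_2^n)/M^n$, and $E_{12}:=\bigoplus_{n\ge 0}(N_1+N_2)^n/M^n$. From the short exact sequence
$$0\longrightarrow E_{\cap}\longrightarrow E_1\oplus E_2\longrightarrow E_{+}\longrightarrow 0$$
and additivity of Hilbert polynomials, the degree bound $<s-k$ passes from $E_1\oplus E_2$ to $E_{+}$ (noting $E_{\cap}\subseteq E_1$ automatically inherits the bound). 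To transfer it to $E_{12}$, I would exploit that $M$ is a reduction of $N_1+N_2$ (since $N_1+N_2\subseteq q(M)$): there is a Noether exponent $c_0$ with $(N_1+N_2)^{n+1}=M(N_1+N_2)^n$ for $n\ge c_0$, so the mixed products $N_1^iN_2^j$ with $i,j>0$ and $i+j$ large are absorbed into $M^{i+j-c_0}(N_1+N_2)^{c_0}$. An accounting argument inside $\Sym(F)$ should then show that the cokernel $\bigoplus_n (N_1+N_2)^n/(N_1^n+N_2^n)$ has Hilbert polynomial of degree $<s-k$, which yields $\deg P_{(N_1+N_2)/M}(n)<s-k$.

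Once closure is established, $M_k$ is simultaneously the sum and the maximum of $\mathcal{F}_k$, hence the unique largest submodule of $F$ with the stated properties. The chain $M\subseteq M_s\subseteq\cdots\subseteq M_1\subseteq q(M)$ follows from $\mathcal{F}_s\subseteq\cdots\subseteq\mathcal{F}_1$, and the endpoint $M_0=q(M)$ from the Rees-type criterion: $M$ is a reduction of $q(M)$ with $\ell_R(q(M)/M)<\infty$, so $\deg P_{q(M)/M}(n)<s$, while any $N$ with the corresponding property is contained in $q(M)$ by Lemma \ref{propproperties}(ii).

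The principal obstacle I foresee is exactly the accounting step comparing $(N_1+N_2)^n=\sum_{i+j=n}N_1^iN_2^j$ with $N_1^n+N_2^n$. In the ideal case a single Noether exponent disposes of cross products cleanly, whereas for modules one must carry out the same bookkeeping inside $\Sym(F)$ and extract a uniform $c_0$ controlling all mixed products simultaneously, showing in particular that their aggregate contribution to the Hilbert polynomial is of lower order than $s-k$ rather than merely of lower order than $s$ (which is all that the containment in $q(M)$ immediately gives). Once this technical ingredient is in place, the remainder is a direct adaptation of Herzog-Puthenpurakal-Verma.
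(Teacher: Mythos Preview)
Your overall plan---define the family $\mathcal{F}_k$ and show it is closed under sums---is sound, and is essentially what the paper does. The gap is precisely the one you flag: passing from the bound on $\bigoplus_n(N_1^n+N_2^n)/M^n$ to a bound on $\bigoplus_n(N_1+N_2)^n/M^n$. Your suggested route through a Noether exponent $c_0$ for $M\subseteq N_1+N_2$ only yields $(N_1+N_2)^n=M^{n-c_0}(N_1+N_2)^{c_0}$, and the mixed pieces $M^{n-c_0}N_1^iN_2^{c_0-i}$ are not visibly controlled by $\ell_R(N_1^n/M^n)$ and $\ell_R(N_2^n/M^n)$ alone; the reduction gives degree $<s$, not $<s-k$, and there is no obvious mechanism in your outline to upgrade this.

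The paper avoids this difficulty by \emph{not} attacking $N_1+N_2$ symmetrically. Instead it first picks a maximal $N\in\mathcal{F}_k$ (Noetherian), then for any $L\in\mathcal{F}_k$ and any single $x\in L$ shows $(N,x)\in\mathcal{F}_k$; maximality then forces $x\in N$, hence $L\subseteq N$. The one-element trick is what makes the bookkeeping work: since $(M,x)\subseteq L$ one has $(M,x)\in\mathcal{F}_k$ automatically, so $\ell_R((M,x)^n/M^n)$ already has degree $<s-k$. Using that $N$ is a reduction of $(N,x)$ one writes, for a fixed $t$ and all large $n$,
\[
(N,x)^n=N^{n-t}(N,x)^t=N^n+N^{n-1}x+\cdots+N^{n-t}x^t,
\]
and then bounds $\ell_R((N,x)^n/M^n)$ by a finite sum of the three quantities $\ell_R((M,x)^n/M^n)$, $\ell_R(N^n/M^n)$, and $\ell_R(N^{n-i}/M^{n-i})$ for $1\le i\le t$, each of degree $<s-k$. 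The crucial point is that the cross-terms $N^{n-i}x^i$ are compared, via the chain $M^{n-i}x^i\subseteq(M,x)^n$, to something already known to be small of the right order---this is exactly the upgrade from $<s$ to $<s-k$ that your symmetric approach lacks. If you want to salvage your formulation, note that the paper's argument in fact proves ``$N\in\mathcal{F}_k$ and $x\in L\in\mathcal{F}_k$ imply $(N,x)\in\mathcal{F}_k$'' without using maximality of $N$, so iterating over a generating set of $N_2$ recovers your closure under sums.
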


\begin{proof} Fix $0\leq k\leq s$. Consider the sets

$$V_{k}=\left\{ L\subseteq F \ | \ \ M\subseteq L,\ 
\ell_R\left(\frac{L}{M}\right)<\infty,\ \mbox{deg}\left(P_{\frac{L}{M}}(n)\right)<s-k\right\}$$

If $L\in V_{k}$, then $\mbox{deg}(P_{L/M})<s-1$ and $\ell_R(L/M)<\infty$. Hence, by Huneke-Swanson's theorem \cite[Theorem 6.5.6 (3)]{huneke}, $M$ is a reduction of $L$ and by  Lemma \ref{propproperties} item $(ii)$, $L\subseteq q(M)$.
Since $M\in V_{k}$ and $F$ is a Noetherian $R$-module, $V_{k}$ has a maximal element, say $N$.

We show that $N$ is unique. Let $L\in V_{k}$ and $x\in L$. Since $M\subseteq (M,x)\subseteq L$,
$$\ell_R\left(\frac{{(M,x)}^n}{M^n}\right)\leq \ell_R\left(\frac{L^n}{M^n}\right)<\infty,$$
for all large $n$. Hence
$$\mbox{deg}\left(P_{\frac{(M,x)}{M}}\right)\leq \mbox{deg}\left(P_{\frac{L}{M}}\right)<s-k.$$
By \cite[Theorem 6.5.6]{huneke}, $M$ is a reduction of $(M,x)$, i.e., $M{(M,x)}^r={(M,x)}^{r+1}$ for some $r$. Then $x^{r+1}\in M{(M,x)}^{r}\subseteq N{(N,x)}^r$. Hence $N{(N,x)}^r={(N,x)}^{r+1}$. It follows that, for all $n\geq r$, $N^{n-r}{(N,x)}^r={(N,x)}^n$.

Because $\ell_R((M,x)/M)<\infty$, there exists some $t>0$ such that $\left(x,M\right){\mathfrak{m}}^t\subseteq M\subseteq N$. Then $x{\mathfrak{m}}^t\subseteq M\subseteq N$ and ${\mathfrak{m}}^tM\subseteq M\subseteq N$. Hence ${\mathfrak{m}}^tM\subseteq M\subseteq\left(N,x\right)$, so that $\ell_R\left((N,x)/M\right)<\infty$. We have
$$
\begin{array}{lll}
\displaystyle\ell_R\left(\frac{{\left(N,x\right)}^n}{M^n}\right) & =  & \displaystyle\ell_R\left(\frac{{(N,x)}^tN^{n-t}}{M^n}\right) \\
& =  &  \displaystyle\ell_R\left(\frac{\left(N^n,N^{n-1}x,\dots,N^{n-t}x^t\right)}{M^n}\right)\\
& \leq & \displaystyle\sum^t_{i=1}\ell_R\left(\frac{N^{n-i}x^i+N^n}{M^n}\right) \\
\\
& \leq & \displaystyle\sum^t_{i=1}\left[\ell_R\left(\frac{N^{n-i}x^i+M^n}{M^n}\right)+\ell_R\left(\frac{N^n}{M^n}\right)\right] \\
\\
& \leq & \displaystyle\sum^t_{i=1}\left[\ell_R\left(\frac{M^{n-i}x^i+M^n}{M^n}\right)+\ell_R\left(\frac{N^n}{M^n}\right)+\ell_R\left(\frac{N^{n-i}x^i+M^n}{M^{n-i}x^i+M^n}\right)\right] \\
& \leq & \displaystyle\sum^t_{i=1}\left[\ell_R\left(\frac{{\left(M,x\right)}^n}{M^n}\right)+\ell_R\left(\frac{N^n}{M^n}\right)+\ell_R\left(\frac{N^{n-i}}{M^{n-i}}\right)\right].
\end{array}
$$
Hence $\mbox{deg}(P_{(N,x)/M})<s-k.$ Therefore $(N,x)\subseteq V_{k}$. By maximality of $N$, we conclude that $x\in N$. Hence $L\subseteq N$ and therefore $N$ is unique and it will be denoted by $M^F_{k}$.
\end{proof}

\begin{definition}
The $R$-submodules $M^F_{k}$ of $F$, for $k=1,\ldots, s$, which appears in the Theorem \ref{teoralgcoefarb}, are called the $k$-th Coefficient Modules of $M$ in $F$. 
\end{definition}

\begin{theorem}\label{teorcoefestruturaparmod}{\rm [Structure Theorem for Coefficient Modules]}
Let $(R,\mathfrak{m})$ be a formally equidimensional and $d$-dimensional local ring with infinite residue field. Let $M$ be a finitely generated $R$-submodule of $F$ with analytic spread $s:=s(M)\geq 1$. Let $k\in \{1,\dots,s\}$. Then
$$
M^F_{k}=\bigcup_{n, \ \overline{x}}\left(M^{n_0+1}:_Fx_1,\ldots,x_k\right)\cap M^{sat},$$
for some fixed $n_0\geq 1$ and $\overline{x}=( x_1,\ldots,x_k,\dots,x_s)$ is a minimal reduction of $M^{n_0}$.
\end{theorem}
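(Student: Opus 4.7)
The plan is to verify both inclusions of the displayed equality. Denote the right-hand side by
$$N_k := \bigcup_{n_0,\,\overline{x}} \bigl(M^{n_0+1}:_F x_1,\ldots,x_k\bigr) \cap M^{sat},$$
the union taken over $n_0\geq 1$ and minimal reductions $\overline{x}=(x_1,\ldots,x_s)$ of $M^{n_0}$. The overall strategy is to shuttle between two pictures: the colon-and-saturation description in $F$, and the annihilator-and-Krull-dimension description of $\overline{y}$ inside the fiber cone $\mathscr{F}(M^{n_0})$.

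For the containment $N_k \subseteq M_k^F$, I take $y \in N_k$ and first check that $y \in q(M)$: since $yx_i \in M^{n_0+1}$ with $\overline{x}$ a minimal reduction of $M^{n_0}$, Corollary \ref{coropropoeliminandoelred2} gives $y \in \overline{M}$, which combined with $y \in M^{sat}$ yields $y \in q(M)$. In particular $M$ is a reduction of $(M,y)$ of finite colength, so $P_{(M,y)/M}(n)$ is defined. Next I pass to the fiber cone: the relations $yx_j \in M^{n_0+1}$, $j=1,\ldots,k$, descend to $\overline{x_j}\cdot \overline{y}=0$ in $\mathscr{F}(M^{n_0})$, while Lemma \ref{lemaferramentas} guarantees that $(\overline{x_1},\ldots,\overline{x_s})$ is an $\mathscr{M}$-primary homogeneous ideal of $\mathscr{F}(M^{n_0})$. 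Hence the cyclic $\mathscr{F}(M^{n_0})$-submodule generated by $\overline{y}$ has Krull dimension at most $s-k$, which translates into the length estimate
$$\ell_R\bigl((M,y)^{n}/M^{n}\bigr) = O\bigl(n^{\,s-k-1}\bigr),$$
and hence $\deg(P_{(M,y)/M}(n)) < s-k$. Therefore $(M,y)$ belongs to the set $V_k$ built in the proof of Theorem \ref{teoralgcoefarb}, and its maximality forces $y \in M_k^F$.

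For the reverse inclusion $M_k^F \subseteq N_k$, I start with $y \in M_k^F$, which automatically gives $y \in q(M) \subseteq M^{sat}$; the task is to exhibit $n_0$ and a minimal reduction of $M^{n_0}$ realizing the colon condition. The hypothesis $\deg(P_{(M,y)/M}(n))<s-k$ forces, through the standard dictionary between the Rees-Amao polynomial of a pair and the Hilbert function of the fiber cone, that for all sufficiently large $n_0$ the image $\overline{y}$ of $y$ in $\mathscr{F}(M^{n_0})$ generates a graded submodule of Krull dimension strictly less than $s-k$. Using that the residue field is infinite, a prime avoidance combined with generic linear combinations extracts $k$ homogeneous elements $\overline{\xi_1},\ldots,\overline{\xi_k}$ of $\mathscr{F}(M^{n_0})$ annihilating $\overline{y}$ that complete to a full homogeneous system of parameters $\overline{\xi_1},\ldots,\overline{\xi_s}$. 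Lifting to $x_1,\ldots,x_s \in M^{n_0}$, Lemma \ref{lemaferramentas} says $(x_1,\ldots,x_s)$ is a minimal reduction of $M^{n_0}$, and the annihilations $\overline{\xi_i}\,\overline{y}=0$ then translate into the integral membership $y x_i \in M^{n_0+1}$.

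The main obstacle is the last step of the reverse direction: the vector-space annihilation $\overline{\xi_i}\,\overline{y}=0$ in $\mathscr{F}(M^{n_0})$ a priori only gives $y x_i \in \mathfrak{m}M^{n_0+1}+M^{n_0+1}$, and one must upgrade this to the literal containment $y x_i \in M^{n_0+1}$. I will handle this exactly as in the ideal case treated by Herzog--Puthenpurakal--Verma \cite{herzog} and by the authors in \cite{CPF}: one passes to a sufficiently large power $n_0' = tn_0$ and repeats the extraction at the new level, using that minimal reductions transport well under raising to powers (the number of generators stays at $s$), so that the successive $\mathfrak{m}$-adic corrections are absorbed and the desired equality $y x_i \in M^{n_0'+1}$ is reached. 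Once this is in hand, the two containments combine to give $M_k^F = N_k$.
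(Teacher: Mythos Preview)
Your architecture is right, but the inclusion $M_k^F \subseteq N_k$ has a genuine gap, and your proposed fix does not work. First, the obstacle as you state it is vacuous: $\mathfrak{m}M^{n_0+1}+M^{n_0+1}=M^{n_0+1}$. What you presumably intend is that if $\overline{\xi_i}$ only annihilates $\overline{y}$ in $L\otimes_{\mathscr{R}(M)}\mathscr{F}(M)$ (with $L=\bigoplus_n (M,y)M^n/M^{n+1}$), then an arbitrary lift $x_i\in M^{n_0}$ only gives $yx_i\in M^{n_0+1}+\mathfrak{m}(M,y)M^{n_0}$, which is strictly weaker than $yx_i\in M^{n_0+1}$. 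That obstruction is real, and passing to powers $n_0'=tn_0$ does nothing to remove it: the $\mathfrak{m}$-correction term regenerates at every level. Neither \cite{herzog} nor \cite{CPF} contains such an absorption argument; the ideal case is handled the same way the paper handles the module case.

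The paper's route bypasses the problem rather than solving it. Instead of the fiber-cone annihilator of $\overline{y}$, one takes the Rees-algebra annihilator $(0:_{\mathscr{R}(M)}L)=\bigoplus_n(M^{n+1}:_{\mathscr{R}(M)}y)\cap M^n$ and projects it to a homogeneous ideal $H_k\subseteq\mathscr{F}(M)$. The surjection $\mathscr{R}(M)/(0:L)\twoheadrightarrow\mathscr{F}(M)/H_k$ yields $\dim(\mathscr{F}(M)/H_k)\leq s-k$, and Shah's Lemma~2(E) then produces homogeneous $\overline{x}_1,\ldots,\overline{x}_s$ of common degree $n_0$ forming a system of parameters with $\overline{x}_1,\ldots,\overline{x}_k\in H_k$. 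Because $H_k$ is the \emph{image} of the true annihilator, each $\overline{x}_i$ with $i\leq k$ admits a lift $x_i\in(M^{n_0+1}:y)\cap M^{n_0}$, so $yx_i\in M^{n_0+1}$ exactly; the remaining $x_{k+1},\ldots,x_s$ are arbitrary lifts, and Lemma~\ref{lemaferramentas} gives the minimal reduction. A secondary remark on your other inclusion: the passage from $\dim(\overline{y}\cdot\mathscr{F})\leq s-k$ to $\ell_R((M,y)^n/M^n)=O(n^{s-k-1})$ is not automatic; one needs the reduction identity $(M,y)^{r+n}=(M,y)^rM^n$ and a filtration bounding $\ell_R((M,y)^{r+n}/M^{r+n})$ by a constant times $\ell_R(L_n)$, together with the observation that $y\in M^{sat}$ forces $\mathfrak{m}^pL=0$, so that $L$ is a graded module over $\mathscr{R}(M)/\mathfrak{m}^p\mathscr{R}(M)$ and its Hilbert function is polynomial of the expected degree.
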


\begin{proof}

 Fix $1\leq k\leq s$ and let $y\in M^F_{k}$.  Then $M\subseteq(M,y)\subseteq M^F_{k}$, and hence
$$
\ell_R\left(\frac{{\left(M,y\right)}^n}{M^n}\right)\leq P_{\frac{M^F_{k}}{M}}(n),
$$
where, by definition of coefficient module we have
$\deg(P_{M^F_{k}/M}(n))<s-k.$

In particular, since $(M,y)M^{n-1}\subseteq{(M,y)}^n$,
\begin{equation}\label{eqteoralgcoefestrutura1}
\ell_R\left(\frac{{\left(M,y\right)}M^{n-1}}{M^n}\right)
\leq\ell_R\left(\frac{{\left(M,y\right)}^n}{M^n}\right)\leq
P_{\frac{M^F_{k}}{M}}(n).
\end{equation}

Set
$$
L=\bigoplus_{n\geq0}L_n,\quad\textrm{ where }\quad L_n:=\frac{\left(M,y\right)M^n}{M^{n+1}}.
$$
Clearly $L$ has the structure of an $\mathscr{R}(M)$-module. By equation (\ref{eqteoralgcoefestrutura1}) we have that
$\dim(L)\leq s-k,$ that is,
\begin{equation}\label{eqteoralgcoefestrutura2}\dim(L)=\dim\left(\frac{\mathscr{R}(M)}{\left(0:_{\mathscr{R}(M)}L\right)}\right)\leq s-k.
\end{equation}

We can easily prove that
$$\left(0:_{\mathscr{R}(M)}L\right)=\bigoplus_{n\geq0}\left(M^{n+1}:_{\mathscr{R}(M)}y\right)\cap M^n.$$


Consider the natural projection $\pi$ from $\mathscr{R}(M)$ to $\mathscr{F}(M)$. Then
\begin{equation}\label{eqteoralgcoefestrutura3}\pi\left(\oplus_{n\geq0}\left(M^{n+1}:_{\mathscr{R}(M)}y\right)\cap M^n\right)=
\bigoplus_{n\geq0} \frac{\left(M^{n+1}:_{\mathscr{R}(M)}y\right)\cap
M^n+\mathfrak{m}M^n}{\mathfrak{m}M^n}=:H_k.
\end{equation} 

Then $H_k$ is an $R$-module and we have a surjective 
$R$-homomorphism
$$
\begin{array}{rcl}\displaystyle\frac{\mathscr{R}(M)}{(0:_{\mathscr{R}(M)}L)} & \longrightarrow &
\displaystyle\frac{\mathscr{F}(M)}{H_k}\end{array},
$$
and hence, by equation, (\ref{eqteoralgcoefestrutura2})
\begin{equation}\label{eqteoralgcoefestrutura4}\dim\left(\frac{\mathscr{F}(M)}{H_k}\right)\leq s-k.
\end{equation}
By assumption, we have $\dim(\mathscr{F}(M))=s$, that is, $s=s(M)$.  By equation (\ref{eqteoralgcoefestrutura4}) and by \cite[Lemma 2(E)]{shah}, we can obtain elements ${\overline{x}}_1,\ldots,{\overline{x}}_s$ in the fiber cone $\mathscr{F}(M)$ such that
\begin{enumerate}
\item[(1) ]{${\overline{x}}_1,\ldots,{\overline{x}}_s$ are homogeneous of the same degree $n_0\geq 1$;}
\item[(2) ]{$\ell_R\left(\frac{\mathscr{F}(M)}{\left({\overline{x}}_1,\ldots,{\overline{x}}_s\right)}\right)<\infty$;}
\item[(3) ]{We have $\dim\left(\frac{\mathscr{F}(M)}{\left({\overline{x}}_1,\ldots,{\overline{x}}_k\right)}\right)=s-k$;}
\item[(4) ]{${\overline{x}}_1,\ldots,{\overline{x}}_k$ are in
$H_k$.}
\end{enumerate}
Let ${{x}}_1,\ldots,{{x}}_s$ be any preimages of
${\overline{x}}_1,\ldots,{\overline{x}}_s$ in $M^{n_0}.$ By Lemma \ref{lemaferramentas}  and $(2)$ just above, the elements ${{x}}_1,\ldots,{{x}}_s$ form a minimal reduction of $M^{n_0}$. Since ${\overline{x}}_1,\ldots,{\overline{x}}_k$ are in $H_k$ we obtain by equation (\ref{eqteoralgcoefestrutura3}) that $y$ is in the projection $(M^{n_0+1}:_{\mathscr{R}(M)}x_1,\ldots,x_k)$. Therefore,
$M_{k}\subseteq\cup(M^{n+1}:_{\mathscr{R}(M)}(x_1,\ldots,x_k))$ and hence $M_{k}\subseteq\cup(M^{n+1}:_F(x_1,\ldots,x_k))\cap M^{sat}.$

Conversely, take $y\in(M^{n_0}:_Fx_1,\ldots,x_k)\cap
M^{sat}.$ In particular $yx_i\in M^{n_0}$. Thus, by Lemma
\ref{coropropoeliminandoelred2}, $y\in\overline{M}$. Therefore $M$ is a
reduction of $(M,y)$, i.e.,
${(M,y)}^{s+n}={\left(M,y\right)}^sM^n$ for some $s\geq0$ and all $n\geq0$. Then
$$
\begin{array}{lll}
\vspace{0.3cm}
\ell_R\left(\frac{{\left(M,y\right)}^{s+n}}{M^{n+s}}\right) & =
& \ell_R\left(\frac{{\left(M,y\right)}^sM^n}{M^{n+s}}\right) \\
\vspace{0.3cm} & =
& \displaystyle\sum^s_{i=1}\ell_R\left(\frac{{\left(M,y\right)}^iM^{n+s-i}}{{\left(M,y\right)}^{i-1}M^{n+s-i-1}}\right) \\
\vspace{0.3cm} & = &
\displaystyle\sum^s_{i=1}\ell_R\left(\frac{{\left(M,y\right)}^{i-1}M^{s-i}{\left(M,y\right)}M^n}
{{\left(M,y\right)}^{i-1}M^{s-i}M^{n+1}}\right) \\
\vspace{0.3cm} & \leq &
\displaystyle\sum^s_{i=1}c_i\ell_R\left(\frac{\left(M,y\right)M^n}{M^{n+1}}\right)
\end{array}
$$
for some positive constant integers $c_i$. The last inequality can be seen by canonically mapping for each $i$:
$$
\bigoplus^{c_i}_{n=1}L_n\longrightarrow\frac{{(M,y)}^{i-1}M^{s-i}{(M,y)}M^n}{{(M,y)}^{i-1}M^{s-i}M^{n+1}}
\longrightarrow 0,
$$ where $L_n=(M,y)M^n/M^{n+1}$. Choose $c_i$ as
the minimal number of generators of ${(M,y)}^{i-1}M^{s-i}$. Set
$c=\sum c_i$. Thus
\begin{equation}\label{bound} \ell_R\left(\frac{{\left(M,y\right)}^{s+n}}{M^{n+s}}\right)\leq
c\cdot\ell_R(L_n).
\end{equation}

Since $y\in M^{sat}$ there exists a positive integer $p$ such that $y{\mathfrak m}^p\subseteq M.$ Set $L=\bigoplus_{n\geq0}L_n$ and $G=\mathscr{R}(M)/{\mathfrak m}^p\mathscr{R}(M)=\bigoplus_{n\geq0} M^n/{\mathfrak m}^pM^n.$ Then, since $y{\mathfrak m}^p\subseteq M,$ we have ${\mathfrak m}^p\mathscr{R}(M)\subseteq \mbox{Ann}_{\mathscr{R}(M)}(L).$ Hence $L$, which is an $\mathscr{R}(M)$-module, has a natural structure of an $G$-graded module. Thus $\ell_R(L_n)$ is, for large $n$, a polynomial in $n$ of degree at most $\dim(L)-1.$ But $\dim(L)=\dim(G/\mbox{Ann}_G(L)).$ Notice that
$$
\mbox{Ann}_G(L)=\left(0:_{\frac{\mathscr{R}(M)}{{\mathfrak m}^p\mathscr{R}(M)}}\oplus_{n\geq0}\frac{\left(M,y\right)M^n}{M^{n+1}}\right)=\bigoplus_{n\geq0}
\frac{\left(M^{n+1}:_{\mathscr{R}(M)}y\right)\cap M^n}{\mathfrak{m}^pM^n}=:A_k.
$$

Since $y\in(M^{n_0}:_Fx_1,\ldots,x_k)$ it follows that $x'_1,\ldots,x'_k\in A_k,$ where $x'_i$ is the image of $x_i$ in $G$. Thus, since $\dim(G)=\dim\left(\mathscr{F}(M)\right)=s$, one concludes that
$$
\dim\left(\frac{G}{\mbox{Ann}_G(L)}\right)\leq \dim\left(\frac{G}{\left(x'_1,\ldots,x'_k\right)}\right)=s-k.
$$

Therefore, by the inequality (\ref{bound}), $\ell_R({(M,y)}^{s+n}/M^{s+n})$ is bounded by a polynomial in  $n$ of degree $s-(k+1)$ for all large $n$. Thus
$$
\deg\left(P_{\frac{(M,y)}{M}}(n)\right)<s-k,
$$
which implies that $M\subset\left(M,y\right)\subset M^F_{k}$. Therefore $y\in M^F_{k}$.
\end{proof}

\begin{theorem}\label{teorcoefestruturafinaparmod}{\rm [Finer Structure Theorem for Coefficient Modules].} Let $(R,\mathfrak{m})$ be a formally equidimensional and $d$-dimensional local ring with infinite residue field. Let $M$ be a finitely generated $R$-submodule of $F$ with analytic spread $s:=s(M)\geq 1$. Then there exists $n_0\geq1$ and a minimal reduction
$x_1,\ldots,x_s$ of
$M^{n_0}$ such that
$$
M^F_{k}=\left(M^{n_0+1}:_F\left(x_1,\ldots,x_k\right)\right)\cap M^{sat}, \text{ for all } k=1,\dots,s. 
$$
\end{theorem}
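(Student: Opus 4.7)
The plan is to combine Theorem \ref{teorcoefestruturaparmod} with a general-position argument in the fiber cone $\mathscr{F}(M)$, exploiting the infinite residue field in order to produce a single pair $(n_0, x_1,\ldots,x_s)$ that witnesses the colon formula for every level $k$ simultaneously. The key point is that the union over $(n_0,\overline{x})$ appearing in Theorem \ref{teorcoefestruturaparmod} can be controlled by one global choice once one understands how the constraints at the various levels $k$ stack up inside $\mathscr{F}(M)$.

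For each $k$, I would introduce the graded $\mathscr{R}(M)$-module
$$
L^{(k)} := \bigoplus_{n\geq 0} \frac{M^F_k\cdot M^n + M^{n+1}}{M^{n+1}},
$$
which embeds into $\bigoplus_n (M^F_k)^{n+1}/M^{n+1}$ and whose Hilbert function is therefore dominated by $P_{M^F_k/M}(n+1)$, a polynomial of degree strictly less than $s-k$ by Theorem \ref{teoralgcoefarb}. Because $M^F_k\subseteq M^{sat}$, a power $\mathfrak{m}^p$ of $\mathfrak{m}$ annihilates $L^{(k)}$, so $L^{(k)}$ is a finitely generated module over the $s$-dimensional graded ring $\mathscr{R}(M)/\mathfrak{m}^p\mathscr{R}(M)$ and hence $\dim L^{(k)}\leq s-k$. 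Setting
$$
H_k := \pi\bigl(\Ann_{\mathscr{R}(M)} L^{(k)}\bigr) \subseteq \mathscr{F}(M), \qquad \pi:\mathscr{R}(M)\twoheadrightarrow\mathscr{F}(M),
$$
one obtains $\dim \mathscr{F}(M)/H_k \leq s-k$ since $\mathscr{F}(M)/H_k$ is a quotient of $\mathscr{R}(M)/\Ann L^{(k)}$, and, because $M^F_{k+1}\subseteq M^F_k$ forces $L^{(k+1)}\subseteq L^{(k)}$, one has the nested chain $H_1\subseteq H_2\subseteq\cdots\subseteq H_s$. Concretely, the degree-$n$ component of $H_k$ is the image in $\mathscr{F}(M)_n$ of $(M^{n+1}:_{\mathscr{R}(M)} M^F_k)\cap M^n$.

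Next, I would invoke a graded-prime-avoidance argument in the spirit of \cite[Lemma 2(E)]{shah}: with infinite residue field, one can choose a common degree $n_0\geq 1$ and elements $\overline{x}_1,\ldots,\overline{x}_s\in\mathscr{F}(M)_{n_0}$ satisfying $\overline{x}_k\in H_k$ and $\dim\mathscr{F}(M)/(\overline{x}_1,\ldots,\overline{x}_k)=s-k$ for every $k=1,\ldots,s$. Inductively: having built $\overline{x}_1,\ldots,\overline{x}_{k-1}$, the new element $\overline{x}_k$ must lie in $H_k$ and avoid every prime $\mathfrak{p}$ of $\mathscr{F}(M)$ with $(\overline{x}_1,\ldots,\overline{x}_{k-1})\subseteq\mathfrak{p}$ and $\dim\mathscr{F}(M)/\mathfrak{p}=s-k+1$. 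The dimension bound $\dim\mathscr{F}(M)/H_k\leq s-k$ forbids $H_k\subseteq\mathfrak{p}$ for any such $\mathfrak{p}$, and the infinite residue field together with the freedom to pass to a Veronese subring allows all the $\overline{x}_k$ to be realized in one common degree $n_0$. Any lifts $x_1,\ldots,x_s\in M^{n_0}$ then form a minimal reduction of $M^{n_0}$ by Lemma \ref{lemaferramentas}.

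Finally, I would verify $M^F_k=(M^{n_0+1}:_F x_1,\ldots,x_k)\cap M^{sat}$ for each $k$. The inclusion $\supseteq$ is precisely the \emph{conversely} direction of the proof of Theorem \ref{teorcoefestruturaparmod}, which applies without change once $x_1,\ldots,x_s$ is a minimal reduction of $M^{n_0}$. For $\subseteq$, note that $\overline{x}_i\in H_i\subseteq H_k$ for all $i\leq k$ by the nested chain, whence each lift $x_i$ with $i\leq k$ belongs to $(M^{n_0+1}:_{\mathscr{R}(M)} M^F_k)\cap M^{n_0}$; equivalently $x_i\, y\in M^{n_0+1}$ for every $y\in M^F_k$. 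Combined with $M^F_k\subseteq q(M)\subseteq M^{sat}$, this yields the required containment. The main obstacle I anticipate is the coordination step in the third paragraph: producing a single $n_0$ and a single minimal reduction simultaneously compatible with every $H_k$. The nested chain $H_1\subseteq\cdots\subseteq H_s$ together with the dimension bounds makes this feasible, but only because the infinite residue field hypothesis permits graded prime avoidance to be carried out inside one fixed degree.
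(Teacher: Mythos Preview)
Your proposal is correct and follows essentially the same approach as the paper: define $L^{(k)}=\bigoplus_n M^F_k M^n/M^{n+1}$, bound its dimension by the degree of $P_{M^F_k/M}$, project the annihilator to the fiber cone to obtain the nested chain $H_1\subseteq\cdots\subseteq H_s$ with $\dim\mathscr{F}(M)/H_k\le s-k$, and then run a graded prime-avoidance argument to produce one common $(n_0,x_1,\ldots,x_s)$, with the reverse inclusion coming from Theorem \ref{teorcoefestruturaparmod}. The only cosmetic difference is that the paper invokes \cite[Lemma~2(F)]{shah} directly for the nested-chain coordination step, whereas you cite \cite[Lemma~2(E)]{shah} (the single-ideal version) and sketch the inductive proof of 2(F) yourself.
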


{\it Proof:} We know, by definition of $M^F_{k}$ and for all large $n$, that
$$
\ell_R\left(\frac{M_{k}^{n}}{M^n}\right)=P_{\frac{M_{k}}{M}}(n)\;\mbox{ and
that } \;\deg\left(P_{\frac{M_{k}}{M}}(n)\right)<s-k.
$$

In particular, since $M\subseteq M^F_{k}$, we  also
have
\begin{equation}\label{eqn-finer1}\ell_R\left(\frac{M^F_{k}M^{n-1}}{M^n}\right)\leq
P_{\frac{M_{k}^F}{M}}(n).
\end{equation}

Set $L^{(k)}:=\bigoplus_{n\geq 0}(M^F_{k}M^{n})/M^{n+1}.$ Clearly $L^{(k)}$ has the structure of an $\mathscr{R}(M)$-module and, by (\ref{eqn-finer1}), we have
\begin{equation}\label{eqn-finer2}\dim\left(L^{(k)}\right)=\dim\left(\frac{\mathscr{R}(M)}{\left(0:_{\mathscr{R}(M)}L^{(k)}\right)}\right)\leq
s-k.
\end{equation} We can easily prove that
$$
\left(0:_{\mathscr{R}(M)}L^{(k)}\right)=\bigoplus_{n\geq0}\left(M^{n+1}:M^F_{k}\right)\cap M^n.
$$

Consider the natural projection $\pi$ from $\mathscr{R}(M)$ to $\mathscr{F}(M)$. Then $$\pi\left(\oplus_{n\geq 0}\left(M^{n+1}:M_{k}\right)\cap
M^n\right)= \bigoplus_{n\geq1} \frac{\left(M^{n+1}:M^F_{k}\right)\cap
M^n+\mathfrak{m}M^n}{\mathfrak{m}M^n}=:H_k.$$ We have a
surjective $R$-homomorphism
$$\begin{array}{rcl}\displaystyle\frac{\mathscr{R}(M)}{\left(0:_{\mathscr{R}(M)}L^{(k)}\right)} & \longrightarrow &
\displaystyle\frac{\mathscr{F}(M)}{H_k}\end{array},$$ 
and hence, by equation (\ref{eqn-finer2}), $\dim(
\mathscr{F}(M)/H_k)\leq s-k,$ for all $k=1,\dots, s.$ Further, $H_1\subseteq \cdots\subseteq H_s$ and each $H_k$ is a homogeneous ideal of $\mathscr{F}(M).$ Thus, by \cite[Lemma 2(F)]{shah}, we can choose homogeneous elements
${\overline{x}}_1,\ldots,{\overline{x}}_s$ in $\mathscr{F}(M)$ of some fixed degree $n_0\geq1$ such that
$$
\ell_R\left(\frac{\mathscr{F}(M)}{\left({\overline{x}}_1,\ldots,{\overline{x}}_s\right)}\right)<\infty
$$
and ${\overline{x}}_1,\ldots,{\overline{x}}_k$ are in $H_k$, for all $1\leq k\leq s$. Let $x_1,\ldots,x_s$ be any preimages of ${\overline{x}}_1,\ldots,{\overline{x}}_s$ in $M^{n_0}.$ Then, by Lemma \ref{lemaferramentas}, $x_1,\ldots,x_s$ form a minimal reduction of $M^{n_0}.$ Now, since ${\overline{x}}_1,\ldots,{\overline{x}}_k$ are in $H_k$ we have that $M^F_{k}\subseteq (M^{n_0+1}:_Fx_1,\ldots,x_k).$ Therefore $M^F_{k}\subseteq
\left(M^{n_0+1}:_Fx_1,\ldots,x_k\right)\cap M^{sat}$ for $1\leq k\leq s$.

The inclusion $(M^{n_0+1}:_Fx_1,\ldots,x_k)\cap
M^{sat}\subseteq M^F_{k}$ follows straight from Theorem \ref{teorcoefestruturaparmod}.

\section{The module coefficients between \textit{I}(\textit{M})\textit{M} and \textit{M}}\label{CoefAss}

In this section, our main focus is on demonstrating the existence of coefficient modules between $I(M)M$ and $M$. We also give the structural properties of these coefficient modules. To this end, we begin by presenting some preliminary facts and concepts.
In \cite{Katz2}, Katz and Kodiyalam investigate the generalization for the associated graded ring generalized $G_{I(M)}(\mathscr{R}(M))$.

Next lemma is a generalization of \cite[Lemma 2(A)]{shah}.

\begin{lemma} \label{equ} Let $M$ be a finitely generated $R$-submodule of $F$ with $\ell_R(F/M)<\infty$ and let $a_1,\ldots,a_s$ be elements of $M^{n_0}$ for some fixed $n_0>0$. We denote $a_i':=a_i+I(M)M^{n_0}$ for $i=1,\ldots,s$ and some $s\geq 1$. Then
$$\ell_{R}\left(\frac{G_{I(M)}(\mathscr{R}(M))}{\left(a_1',\ldots,a_s'\right)}\right)<\infty
\Longleftrightarrow\ 
(a_1,\ldots,a_s)\ \mbox{is a reduction of}\ M^{n_0}.$$
\end{lemma}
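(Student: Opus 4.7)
The plan is to mirror the proof of Lemma~\ref{lemaferramentas} for the fiber cone, replacing the role of $\mathfrak{m}$ by the Fitting ideal $I(M)$. The key preliminary observation, which I would record first, is that since $\ell_R(F/M)<\infty$ one has $I(M)\subseteq \Ann_R(F/M)\subseteq \mathfrak{m}$ and $\sqrt{I(M)}=\mathfrak{m}$, so $I(M)$ is $\mathfrak{m}$-primary and $R/I(M)$ is Artinian. Each graded piece $M^n/I(M)M^n$ of $G:=G_{I(M)}(\mathscr{R}(M))$ is then a finitely generated module over the Artinian ring $R/I(M)$, hence has finite $R$-length. This lets me rephrase the finite-length condition on the graded $R$-module $G/(a_1',\ldots,a_s')$ as the assertion that only finitely many of its homogeneous components are nonzero, equivalently
$$
M^n \;=\; (a_1,\ldots,a_s)M^{n-n_0} + I(M)\,M^n \qquad \text{for all } n\gg 0. \qquad (\ast)
$$

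For the direction $(\Leftarrow)$, I would fix $k_0$ with $(M^{n_0})^{k+1}=(a_1,\ldots,a_s)(M^{n_0})^k$ for every $k\ge k_0$. Writing an arbitrary large degree $m$ as $m=qn_0+r$ with $0\le r<n_0$ and $q\ge k_0+1$, the identity $M^m=M^r\cdot M^{qn_0}$ inside $\mathscr{R}(M)$ combined with $M^{qn_0}=(a_1,\ldots,a_s)M^{(q-1)n_0}$ immediately gives $M^m=(a_1,\ldots,a_s)M^{m-n_0}$, which is strictly stronger than $(\ast)$.

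For the converse $(\Rightarrow)$, assuming $(\ast)$, I would specialize to degrees of the form $n=(k+1)n_0$ with $k$ large, obtaining
$$
M^{(k+1)n_0} \;=\; (a_1,\ldots,a_s)M^{kn_0} + I(M)M^{(k+1)n_0}.
$$
Setting $Q:=M^{(k+1)n_0}/(a_1,\ldots,a_s)M^{kn_0}$, a finitely generated $R$-module, this reads $Q=I(M)Q$; since $I(M)\subseteq\mathfrak{m}$, Nakayama's lemma forces $Q=0$, i.e.\ $(M^{n_0})^{k+1}=(a_1,\ldots,a_s)(M^{n_0})^k$, so $(a_1,\ldots,a_s)$ is a reduction of $M^{n_0}$.

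The only subtle point is the initial translation between ``$\ell_R(G/(a_1',\ldots,a_s'))<\infty$'' and the graded vanishing condition $(\ast)$; this is where the hypothesis $\ell_R(F/M)<\infty$ genuinely enters, guaranteeing via the Fitting-ideal/annihilator comparison that $R/I(M)$ is Artinian so that finite $R$-length and almost-everywhere graded vanishing coincide. Everything else is bookkeeping with powers of $M$ in the Rees algebra $\mathscr{R}(M)$ together with a single application of Nakayama's lemma.
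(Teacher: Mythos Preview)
Your proposal is correct and follows essentially the same approach as the paper: both reduce the finite-length condition to a graded vanishing statement (using that $I(M)$ is $\mathfrak m$-primary so each graded piece has finite length), then use Nakayama's lemma for the $(\Rightarrow)$ direction and direct verification for $(\Leftarrow)$. Your treatment is in fact slightly more careful, since you explicitly handle all large degrees $m$ via the decomposition $m=qn_0+r$, whereas the paper only checks the vanishing at degrees divisible by $n_0$.
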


\begin{proof} A well-known property of Fitting ideals says that $\sqrt{I(M)}=\sqrt{\mbox{Ann}_R(F/M)}$, so $I(M)$ is an $\mathfrak{m}$-primary ideal.
Firstly, observe that
\begin{equation}\label{equi}
\ell_{R}\left(\frac{G_{I(M)}(\mathscr{R}(M))}{\left(a_1',\ldots,a_s'\right)}\right)<\infty
\Leftrightarrow\ 
(x_1',\dots,x_s')\frac{M^{n_0n}}{I(M)M^{n_0 n}}=\frac{M^{n_0+n_0n}}{I(M)M^{n_0+n_0 n}},
\end{equation}
 for $n\gg0$.

$(\Rightarrow)$ Since for $n\gg0$,
$$(x_1',\dots,x_s')\frac{M^{n_0n}}{I(M)M^{n_0 n}}=\frac{M^{n_0+n_0n}}{I(M)M^{n_0+n_0 n}},$$  
we get $(x_1,\dots,x_s)M^{n_0n}+I(M)M^{n_0 +n_0n}=M^{n_0+n_0n}.$
And then, by Nakayama's Lemma one concludes that $(x_1,\dots,x_s)M^{n_0n}=M^{n_0+n_0n}$, which means $(x_1,\dots,x_s)$ is a reduction of $M^{n_0}$.

$(\Leftarrow)$ If $(x_1,\dots,x_s)(M^{n_0})^n=(M^{n_0})^{n+1}$ for some $n\geq 1$ then $(x_1,\dots,x_s)M^{n_0n}=M^{n_0+n_0n}$ for $n\gg0$, in a way that $(x_1,\dots,x_s)M^{n_0n}+I(M)M^{n_0 +n_0n}=M^{n_0+n_0n}.$ The result follows by Equation \eqref{equi}.
\end{proof}

Let $M$ be a finitely generated $R$-submodule of $F$ with $\ell_R(F/M)<\infty$. Then $\sqrt{I(M)}=\mathfrak{m}$, which implies 
$$\dim\left(G_{I(M)}\left(\mathscr{R}(M)\right)\right)= \dim \left(\frac{\mathscr{R}(M)}{I(M)\mathscr{R}(M)}\right)= \dim \left(\frac{\mathscr{R}(M)}{\mathfrak{m}\mathscr{R}(M)}\right)=\dim\left(\mathscr{F}(M)\right)=s.$$ 

Consider an $R$-submodule $L$ of $M$ such that $I(M)M \subseteq L \subseteq M$, and let $\mathscr{P}=\bigoplus_{n\geq 1}\mathscr{P}_n$ with $\mathscr{P}_n=\frac{LM^{n-1}}{I(M)M^n}$ be a $G_{I(M)}(\mathscr{R}(M))$-submodule.

According to \cite[Theorem 4.1.3]{BH}, the numerical function $n \mapsto \ell_R(\mathscr{P}_n)$ becomes a polynomial of degree $\dim \mathscr{P} - 1$, for sufficiently large $n$. In other words, it is a polynomial of degree less  than or equal to $s-1$.

\begin{lemma}  \cite[Lemma 2.1]{LP} \label{ttt}
 Let $R$ be a Noetherian ring and $M$ be an $R$-module, and let $I$, $J$, and $L$ be three $R$-submodules of $M$ such that $L \subseteq I$, $L \subseteq J$, $\ell_R(I/L)<\infty$ and $\ell_R(J/L)<\infty$. Then
$$
\ell_R\left(\frac{I+J}{L}\right) \leq \ell_R\left(\frac{I}{L}\right) + \ell_R\left(\frac{J}{L}\right).
$$
\end{lemma}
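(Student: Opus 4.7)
The plan is to reduce everything to two standard facts: the second isomorphism theorem $(I+J)/J\cong I/(I\cap J)$, and the additivity of length on short exact sequences of finite length modules.

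First I would observe that since $L\subseteq I$ and $L\subseteq J$ we automatically have $L\subseteq I\cap J\subseteq I,J\subseteq I+J$, so every quotient in sight makes sense, and moreover $\ell_R((I\cap J)/L)\leq \ell_R(I/L)<\infty$ and $\ell_R(I/(I\cap J))\leq\ell_R(I/L)<\infty$, so length is additive on any short exact sequence built from these modules.

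Next I would apply length additivity to the short exact sequence
\begin{equation*}
0\longrightarrow \frac{J}{L}\longrightarrow \frac{I+J}{L}\longrightarrow \frac{I+J}{J}\longrightarrow 0,
\end{equation*}
which gives $\ell_R((I+J)/L)=\ell_R(J/L)+\ell_R((I+J)/J)$. Then I would invoke the second isomorphism theorem to rewrite $(I+J)/J\cong I/(I\cap J)$, so that
\begin{equation*}
\ell_R\!\left(\frac{I+J}{L}\right)=\ell_R\!\left(\frac{J}{L}\right)+\ell_R\!\left(\frac{I}{I\cap J}\right).
\end{equation*}
Finally, since $L\subseteq I\cap J\subseteq I$, the surjection $I/L\twoheadrightarrow I/(I\cap J)$ yields $\ell_R(I/(I\cap J))\leq\ell_R(I/L)$, which plugged into the previous identity gives the desired inequality.

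There is no real obstacle: the argument is a direct application of the isomorphism theorems and the additivity of length, and the only point that requires a one-line check is that all lengths appearing are finite so that additivity applies. An equivalent alternative, if one prefers a more symmetric presentation, would be to use the exact sequence $0\to (I\cap J)/L\to I/L\oplus J/L\to (I+J)/L\to 0$ and deduce the sharper equality $\ell_R((I+J)/L)=\ell_R(I/L)+\ell_R(J/L)-\ell_R((I\cap J)/L)$, from which the stated inequality is immediate.
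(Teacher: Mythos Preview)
Your argument is correct. The paper does not supply its own proof of this lemma; it merely cites it from \cite[Lemma 2.1]{LP}, so there is nothing to compare against. Your use of the short exact sequence $0\to J/L\to (I+J)/L\to (I+J)/J\to 0$ together with the second isomorphism theorem $(I+J)/J\cong I/(I\cap J)$ and the inclusion $L\subseteq I\cap J$ is the standard and most natural proof, and your alternative via the exact sequence $0\to (I\cap J)/L\to I/L\oplus J/L\to (I+J)/L\to 0$ is equally valid and in fact gives the sharper inclusion--exclusion identity.
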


By convention, a polynomial of degree less than $-1$ is the zero polynomial.

\begin{theorem}
Let $(R,\mathfrak{m})$ be a $d$-dimensional Noetherian local ring with infinite residue field and let $M$ be a finitely generated $R$-submodule of $F$ with $\ell_R(F/M)<\infty$ and analytic spread $s=s(M)\geq 1$. Then, for each $k=1,\dots,s$, there exists an unique largest module $I(M)M \subseteq M_{[k]}\subseteq M$ such that 
$${\rm deg }\left[\ell_R\left(\frac{M_{[k]}M^{n-1}}{I(M)M^n}\right)\right]\leq s-(k+1),
$$
\noindent for $n\gg 0$. Moreover, we have
$$
I(M)M \subseteq M_{[s]} \subseteq M_{[s-1]} \subseteq \cdots \subseteq M_{[1]} \subseteq M.
$$
\end{theorem}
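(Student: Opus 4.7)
The plan is to adapt the strategy of Theorem~\ref{teoralgcoefarb} to the present setting, using the fact (from the paragraph immediately preceding the theorem) that $\bigoplus_{n\geq 1} LM^{n-1}/I(M)M^n$ is a finitely generated graded $G_{I(M)}(\mathscr{R}(M))$-module whenever $I(M)M \subseteq L \subseteq M$, so its $n$-th length is eventually polynomial of degree at most $s-1$. All lengths in sight are finite because $\sqrt{I(M)}=\mathfrak{m}$ (recall $\ell_R(F/M)<\infty$, so $I(M)$ is $\mathfrak{m}$-primary). For fixed $k\in\{1,\ldots,s\}$ define
\[
W_k := \Bigl\{ L : I(M)M \subseteq L \subseteq M \text{ and } \deg \ell_R\bigl(LM^{n-1}/I(M)M^n\bigr) \leq s-(k+1) \Bigr\}.
\]
The submodule $L=I(M)M$ yields the zero polynomial, so $W_k\neq\emptyset$, and Noetherianity of $M$ produces a maximal element.

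The crux is the uniqueness of this maximal element, which I would derive from closure of $W_k$ under finite sums. For $L_1,L_2\in W_k$, the identity $(L_1+L_2)M^{n-1}=L_1M^{n-1}+L_2M^{n-1}$ combined with Lemma~\ref{ttt} applied to the submodules $L_1M^{n-1},L_2M^{n-1}\subseteq M^n$ (with common submodule $I(M)M^n\subseteq L_iM^{n-1}$, since $I(M)M\subseteq L_i$) gives
\[
\ell_R\!\left(\tfrac{(L_1+L_2)M^{n-1}}{I(M)M^n}\right) \leq \ell_R\!\left(\tfrac{L_1M^{n-1}}{I(M)M^n}\right)+\ell_R\!\left(\tfrac{L_2M^{n-1}}{I(M)M^n}\right).
\]
The right side is eventually a polynomial of degree $\leq s-(k+1)$, so $L_1+L_2\in W_k$. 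If $N_1,N_2$ are both maximal in $W_k$, then $N_i\subseteq N_1+N_2\in W_k$ forces $N_1=N_1+N_2=N_2$, so there is a unique largest module, which we call $M_{[k]}$.

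The chain $I(M)M\subseteq M_{[s]}\subseteq\cdots\subseteq M_{[1]}\subseteq M$ is then automatic: the condition defining $W_k$ becomes strictly stronger as $k$ increases (since $s-(k+1)<s-k$), so $W_k\subseteq W_{k-1}$; in particular $M_{[k]}\in W_{k-1}$, and maximality of $M_{[k-1]}$ yields $M_{[k]}\subseteq M_{[k-1]}$. The only nontrivial step is the sum-closure step, dispatched cleanly by Lemma~\ref{ttt}; the rest is a formal transcription of the template of Theorem~\ref{teoralgcoefarb}, with the fiber cone $\mathscr{F}(M)$ replaced by the associated graded ring $G_{I(M)}(\mathscr{R}(M))$.
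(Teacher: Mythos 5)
Your proof is correct and follows essentially the same strategy as the paper's: both arguments rest on the preliminary fact (via \cite[Theorem 4.1.3]{BH}) that the numerical function $n\mapsto\ell_R(LM^{n-1}/I(M)M^n)$ is eventually polynomial, and both use Lemma~\ref{ttt} to show that the collection $V_k$ is closed under addition so that Noetherianity forces a unique largest element. The only difference is packaging: the paper picks a maximal element $J$ and absorbs arbitrary $L\in V_k$ element-by-element by showing $(J,x)\in V_k$ for $x\in L$, whereas you show closure under sums $L_1+L_2$ directly and conclude; to finish, you should note explicitly that sum-closure also gives $L\subseteq L+M_{[k]}\in W_k = M_{[k]}$ for every $L\in W_k$, so the unique maximal element is in fact the largest, which is what the theorem asserts.
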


\vspace{0.3cm}

\begin{proof}
Fixed $k\in \{1,\dots,s\}$, define the set
$$
\begin{array}{lll}
\displaystyle V_{k} & = & \displaystyle \{ L  \subset F \ | \ L \text{ is an $R$-submodule of } F , \ M \supseteq L \supseteq I(M)M 
 \text{ and }  \\ 
 \\
 & & \displaystyle \hspace{4cm} \deg\left[\ell_R\left(\frac{LM^{n-1}}{I(M)M^n} \right)\right]\leq s-(k+1) \text{ for } n\gg0 \}.
\end{array}
$$

The set $V_k$ is not empty because $I(M)M\in V_k$. As $F$ is a Noetherian $R$-module, $V_k$ contains a maximal element, say $J$. Let $L\in V_k$. In order to prove that $J$ is the unique maximal element in $V_k$, we shall prove that $L\subseteq J$. Let $x\in L$. Then $(I(M)M,x)\subseteq L$ so that, 
$$
\begin{array}{lll}
\displaystyle \ell_R\left(\frac{xM^{n-1}+I(M)M^n}{I(M)M^n}\right) & = & \displaystyle \ell_R\left(\frac{\left(I(M)M,x\right)M^{n-1}}{I(M)M^n}\right) \\
\\
& \leq & \displaystyle \ell_R\left(\frac{LM^{n-1}}{I(M)M^n}\right).
\end{array}
$$ 

Then, by \cite[Theorem 4.1.3]{BH} and  for $n\gg 0$, we get
$$\deg\left[\ell_R\left(\frac{xM^{n-1}+I(M)M^n}{I(M)M^n}\right)\right]\leq s-(k+1).$$
By Lemma \ref{ttt}, we obtain the inequality
$$
\begin{array}{lll}
\displaystyle \ell_R\left(\frac{\left(J,x\right)M^{n-1}}{I(M)M^n}\right)
& = & \displaystyle \ell_R\left(\frac{(J,x)M^{n-1}+I(M)M^n}{I(M)M^n}\right) \\
\\
& = & \displaystyle \ell_R\left(\frac{JM^{n-1}+xM^{n-1}+\mathfrak{m}I^n}{I(M)M^n}\right) \\
\\
 &\leq &
\displaystyle \ell_R
\left(\frac{JM^{n-1}}{I(M)M^n}\right)+\ell_R\left(\frac{xM^{n-1}+I(M)M^n}{I(M)M^n}\right).
\end{array}
$$
By \cite[Theorem 4.1.3]{BH}, we conclude that 
$\deg[\ell_R((J,x)M^{n-1}/I(M)M^n)]\leq d-(k+1)$, so that
$(J,x)\in V_k$. Since $J$ is maximal in $V_k$, it follows that $x\in J$. 
We denote this unique maximal element $J$ by $M_{[k]}$.
Moreover, for $k=1,\dots,s-1$, since 
$$\deg\left[\ell_R\left(\frac{M_{[k+1]}M^{n-1}}{I(M)M^n}\right)\right]\leq s-(k+2)< s-(k+1),
$$
for sufficiently large $n>0$ and by maximality of $M_{[k]}$, one has $M_{[k+1]}\subseteq M_{[k]}$.
\end{proof}

\begin{definition} The module $M_{[k]}$, for each $k=1,\dots, k\leq s$, obtained in the previous result will be called $k$-th Coefficient Module of the Associated Graded Ring $G_{I(M)}(\mathscr{R}(M))$. 
\end{definition}

\begin{theorem}\label{structure1}
Let $(R,\mathfrak{m})$ be $d$-dimensional Noetherian local ring with infinite residue field and let $M$ be a finitely generated $R$-submodule of $F$ with $\ell_R(F/M)<\infty$ and analytic spread $s=s(M)\geq 1$. Let $k\in\{1,\dots,s\}$. Then
$$
M_{[k]}= \bigcup_{n_0 , \ \underline{x} }  M \cap \left(I(M)M^{n_0+1}:_Fx_1,\dots,x_k\right),
$$
where $n_0\geq 1$ and $\overline{x}=(x_1,\dots,x_k, \dots, x_s)$ is a minimal reduction of $M^{n_0}$.
\end{theorem}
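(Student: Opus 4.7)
The plan is to model the argument on the proof of Theorem \ref{teorcoefestruturaparmod}, substituting the associated graded ring $G_{I(M)}(\mathscr{R}(M))$ for the fiber cone $\mathscr{F}(M)$ and Lemma \ref{equ} for Lemma \ref{lemaferramentas}. The hypothesis $\ell_R(F/M)<\infty$ guarantees that $I(M)$ is $\mathfrak{m}$-primary, hence $\dim G_{I(M)}(\mathscr{R}(M))=\dim\mathscr{F}(M)=s$, and the residue field of $R/I(M)$ is still infinite. Unlike Theorem \ref{teorcoefestruturaparmod}, there is no $M^{sat}$ intersection on the right-hand side because every admissible $y$ already lies in $M$.

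For the inclusion $\supseteq$, I would fix a minimal reduction $x_1,\ldots,x_s$ of $M^{n_0}$ and take $y\in M\cap(I(M)M^{n_0+1}:_F x_1,\ldots,x_k)$. Introduce the graded $G_{I(M)}(\mathscr{R}(M))$-module
$$L=\bigoplus_{n\geq 0}\frac{(I(M)M,y)M^n}{I(M)M^{n+1}}.$$
From $x_iy\in I(M)M^{n_0+1}$ one gets $x_iyM^n\subseteq I(M)M^{n_0+n+1}$ for every $n\geq 0$, so the images $\bar{x}_1,\ldots,\bar{x}_k\in G_{I(M)}(\mathscr{R}(M))_{n_0}$ annihilate $L$. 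By Lemma \ref{equ}, the minimal reduction property of $x_1,\ldots,x_s$ is equivalent to $\ell_R(G_{I(M)}(\mathscr{R}(M))/(\bar{x}_1,\ldots,\bar{x}_s))<\infty$, so $\bar{x}_1,\ldots,\bar{x}_s$ form a homogeneous system of parameters for $G_{I(M)}(\mathscr{R}(M))$; consequently $\dim(G_{I(M)}(\mathscr{R}(M))/(\bar{x}_1,\ldots,\bar{x}_k))\leq s-k$. Therefore $\dim L\leq s-k$, and \cite[Theorem 4.1.3]{BH} gives $\deg \ell_R(L_n)\leq s-(k+1)$, which is exactly the condition defining $M_{[k]}$; by maximality, $y\in M_{[k]}$.

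For the reverse inclusion, I would fix $y\in M_{[k]}$ and study the same module $L$. Since $(I(M)M,y)\subseteq M_{[k]}$, the defining bound on $M_{[k]}$ forces $\ell_R(L_n)$ to be a polynomial of degree $\leq s-(k+1)$ in $n$, so $\dim L\leq s-k$; equivalently,
$$\dim\bigl(G_{I(M)}(\mathscr{R}(M))/(0:_{G_{I(M)}(\mathscr{R}(M))}L)\bigr)\leq s-k.$$
Applying \cite[Lemma 2(E)]{shah} to the standard graded algebra $G_{I(M)}(\mathscr{R}(M))$, which has dimension $s$ and whose $0$-th piece is the Artinian local ring $R/I(M)$ with infinite residue field, I would extract homogeneous elements $\bar{x}_1,\ldots,\bar{x}_s$ of a common degree $n_0\geq 1$ with $\ell_R(G_{I(M)}(\mathscr{R}(M))/(\bar{x}_1,\ldots,\bar{x}_s))<\infty$ and $\bar{x}_1,\ldots,\bar{x}_k\in (0:L)$. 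Lifting to $x_i\in M^{n_0}$, Lemma \ref{equ} certifies that $x_1,\ldots,x_s$ is a minimal reduction of $M^{n_0}$, while evaluating the annihilation of $L$ on its degree-$0$ component yields $x_iy\in I(M)M^{n_0+1}$ for $1\leq i\leq k$. Thus $y\in M\cap(I(M)M^{n_0+1}:_F x_1,\ldots,x_k)$.

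The most delicate point will be justifying the use of \cite[Lemma 2(E)]{shah}, originally stated for standard graded algebras over a field, in the context of $G_{I(M)}(\mathscr{R}(M))$; the passage is routine but requires passing to $R/I(M)$ modulo its nilradical and performing prime avoidance to produce the $\bar{x}_i$ of common degree $n_0$, which is exactly where the infinite residue field hypothesis enters. Apart from this, the argument is parallel to Theorem \ref{teorcoefestruturaparmod}, with the simplification that the intersection with $M^{sat}$ is no longer needed.
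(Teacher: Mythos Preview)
Your proposal is correct and follows essentially the same approach as the paper: both directions use the cyclic graded $G_{I(M)}(\mathscr{R}(M))$-module generated by the image of $y$ (your $L$ is the paper's $E$ up to a grading shift), appeal to \cite[Theorem 4.1.3]{BH} for the degree--dimension correspondence, and invoke Lemma~\ref{equ} and \cite[Lemma 2(E)]{shah} exactly as you describe. Your caution about applying \cite[Lemma 2(E)]{shah} over the Artinian local ring $R/I(M)$ rather than a field is well taken; the paper applies the lemma without comment, so your remark is, if anything, more scrupulous than the published argument.
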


\begin{proof}
Let $y\in M_{[k]}\subseteq M$. We will prove that $y\in M \cap \left(I(M)M^{n_0+1}:_Fx_1,\dots,x_k\right)$ for some $n_0\geq 1$ and some minimal reduction  $(x_1,\dots,x_k, \dots, x_s)$ of $M^{n_0}$, which proves one inclusion.
From the inequality
$$
\ell_R\left(\frac{yM^{n-1}+I(M)M^n}{I(M)M^n} \right)= \ell_R\left(\frac{(I(M)M,y)M^{n-1}}{I(M)M^n} \right)\leq \ell_R\left(\frac{M_{[k]}M^{n-1}}{I(M)M^n} \right),
$$
we obtain by \cite[Theorem 4.1.3]{BH} that
$$
\deg\left[\ell_R\left(\frac{yM^{n-1}+I(M)M^n}{I(M)M^n} \right)\right] \leq \deg\left[\ell_R\left(\frac{M_{[k]}M^{n-1}}{I(M)M^n} \right)\right] \leq s-(k+1),
$$
for $n\gg 0$. Now, define a graded $G_{I(M)}(\mathscr{R}(M))$-submodule $$E=\frac{\left(I(M)M,y\right)}{I(M)M}G_{I(M)}(\mathscr{R}(M))=\bigoplus_{n\geq 1}\frac{\left(I(M)M,y\right)M^{n-1}}{I(M)M^n}=\bigoplus_{n\geq 1}\frac{yM^{n-1}+I(M)M^n}{I(M)M^n}$$ of $G_{I(M)}(\mathscr{R}(M))$. Set $A=\text{Ann}_R(E)$. It follows by \cite[Theorem 4.1.3]{BH} that 
$$\deg\left[\ell_R\left(\frac{yM^{n-1}+I(M)M^n}{I(M)M^n} \right)\right]=\dim(E)-1.$$ Thus
$\dim E - 1\leq s-(k+1)$, that is, $\dim E=\dim G_{I(M)}(\mathscr{R}(M))/A\leq s-k$. Note that $A$ is a homogeneous ideal in $G_{I(M)}(\mathscr{R}(M))$, so by \cite[Lemma 2(E)]{shah} we get homogeneous elements $x_1',\dots,x_s'$ in $G_{I(M)}(\mathscr{R}(M))$ of the same degree, say $n_0\geq 1$, such that 
$$\ell_R\left(\frac{G_{I(M)}(\mathscr{R}(M))}{\left(x_1',\dots,x_s'\right)} \right)<\infty \text{ and } x_1',\dots,x_k'\in A.$$
Let $x_i$ denote a preimage of $x_i^{\circ}$
in $M^{n_0}$, for $1\leq i\leq d$. Then, by Lemma \ref{equ} the ideal $(x_1,\dots,x_s)$ is a minimal reduction of $M^{n_0}$. Also, $x_1'E=\cdots =x_k'E=0$, and since $y+I(M)M\in E$ we obtain
$0=x_i'(y+I(M)M)=x_iy+I(M)M^{n_0+1}$ for each $i=1,\dots,k$,
that is, $y\in M\cap (I(M)M^{n_0+1}:x_1,\dots,x_k)$.  

For the reverse inclusion, suppose $y\in M\cap (I(M)M^{n_0+1}:x_1,\dots,x_k)$, where $(x_1,\dots,x_k,\dots,x_s)$ is a minimal reduction of $M^{n_0}$. By Lemma \ref{equ}, the canonical images $x_1',\dots,x_s'$ in $G_{I(M)}(\mathscr{R}(M))$ form a system of parameters of $G_{I(M)}(\mathscr{R}(M))$. Now, we prove that
$$
\deg\left[\ell_R\left(\frac{\left(I(M)M,y\right)M^{n-1}}{I(M)M^n} \right)\right]\leq s-(k+1), \text{ for }n\gg 0.
$$
Consider the graded $G_{I(M)}(\mathscr{R}(M))$-submodule $$E=\frac{\left(I(M)M,y\right)}{I(M)M}G_{I(M)}(\mathscr{R}(M))=\bigoplus_{n\geq 1}\frac{\left(I(M)M,y\right)M^{n-1}}{I(M)M^n}$$ of $G_{I(M)}(\mathscr{R}(M))$. By \cite[Theorem 4.1.3]{BH}, for $n$ large, we have
$$\deg\left[\ell_R\left(\frac{(I(M)M,y)M^{n-1}}{I(M)I^n} \right)\right] = \dim(E) -1=\dim\left(\frac{\mathscr{F}(I)}{\text{Ann}_R(E)}\right)-1.$$
As
$y\in M\cap (I(M)M^{n_0+1}:x_1,\dots,x_k)$, we obtain that $x_{i}'y'=0$ in $G
_{I(M)}(\mathscr{R}(M))$, where $y'=y+I(M)M$, for each $i\in \{1,\dots,k\}$. Since $E=y'G_{I(M)}(\mathscr{R}(M))$, it follows that 
$x_1',\dots,x_k'\in \text{Ann}_R(E)=A$. Then $\dim(E)=\dim(G_{I(M)}(\mathscr{R}(M))/A)\leq \dim(G_{I(M)}(\mathscr{R}(M))/(x_1',\dots,x_k'))=s-k$, as $x_1',\dots,x_s'$ is a system of parameters of $G_{I(M)}(\mathscr{R}(M))$. Therefore we get that
$\deg[\ell_R((I(M)M,y)M^{n-1}/I(M)M^n)] \leq s-(k+1)$. From this, by maximality of $M_{[k]}$, we get $(I(M)M,y) \subseteq M_{[k]}$ which proves that $M\cap (I(M)M^{n_0+1}:x_1,\dots,x_k)\subseteq M_{[k]}$.
\end{proof}

\begin{theorem}\label{equim}
Let $(R,\mathfrak{m})$ be a formally equidimensional Noetherian and $d$-dimensional local ring with infinite residue field and $d\geq 1$. Let $M$ be a finitely generated $R$-submodule of $F$ such that $\ell_R(F/M)<\infty$ and assume that $s(M)=d+p-1$. Then $G_{I(M)}(\mathscr{R}(M))$ is equidimensional.
\end{theorem}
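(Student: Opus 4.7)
The plan is to reduce equidimensionality of $A := G_{I(M)}(\mathscr{R}(M))$ to equidimensionality of the fiber cone $\mathscr{F}(M)$, and then to deduce the latter from the formal equidimensionality of the Rees algebra $\mathscr{R}(M)$ combined with the maximal-analytic-spread hypothesis $s(M)=d+p-1$.

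First, since $\ell_R(F/M)<\infty$ forces $I(M)$ to be $\mathfrak{m}$-primary, $\mathfrak{m}^N\subseteq I(M)$ for some $N\geq 1$, so the image of $\mathfrak{m}$ in $A$ is nilpotent. Every prime of $A$ therefore contains $\mathfrak{m}A$, producing a natural homeomorphism
\[
\Spec(A)\;\longleftrightarrow\;\Spec(A/\mathfrak{m}A)\;=\;\Spec(\mathscr{F}(M))
\]
that preserves dimensions of residue domains; thus it suffices to prove that $\mathscr{F}(M)$ is equidimensional of dimension $s=d+p-1$. Next I would invoke the standard fact that when $R$ is formally equidimensional and $\ell_R(F/M)<\infty$, the Rees algebra $\mathscr{R}(M)$ localized at its irrelevant graded maximal $\mathcal{N}$ is formally equidimensional of dimension $d+p$ (using $\mathscr{R}(M)\subseteq \mathscr{R}(F)=R[t_1,\ldots,t_p]$ and that both rings share the same total quotient ring after inverting a suitable element of $I(M)$). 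In particular $\mathscr{R}(M)_{\mathcal{N}}$ is catenary equidimensional, so $\height(P)+\dim(\mathscr{R}(M)_{\mathcal{N}}/P)=d+p$ for every prime $P$, and the hypothesis $s(M)=d+p-1$ gives $\height(\mathfrak{m}\mathscr{R}(M))=1$.

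The crux is to prove that \emph{every} minimal prime of $\mathfrak{m}\mathscr{R}(M)$ in $\mathscr{R}(M)_{\mathcal{N}}$ has height exactly $1$. My plan is to choose, via prime avoidance, a non-zero-divisor $y\in\mathfrak{m}$ on $\mathscr{R}(M)_{\mathcal{N}}$ (available because formal equidimensionality precludes embedded associated primes from containing all of $\mathfrak{m}$), and then to apply Ratliff's theorem in the formally equidimensional setting to conclude that every minimal prime of $(y)$ has height $1$. Combined with the hypothesis $s(M)=d+p-1$, one shows that the minimal primes of $\mathfrak{m}\mathscr{R}(M)$ coincide with those of $(y)$, so each has height $1$; by catenary equidimensionality, $\dim(\mathscr{R}(M)_{\mathcal{N}}/P)=d+p-1$ for every such $P$, whence $\dim(\mathscr{F}(M)/\bar{P})=d+p-1$ for every minimal prime $\bar{P}$ of $\mathscr{F}(M)$. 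Thus $\mathscr{F}(M)$ is equidimensional and so is $A$.

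The main obstacle is the height-unmixedness of $\mathfrak{m}\mathscr{R}(M)$ in $\mathscr{R}(M)_{\mathcal{N}}$: this is the module analog of the key argument in Shah's proof of his Theorem~4, and it is precisely here that the hypothesis $s(M)=d+p-1$ enters essentially.
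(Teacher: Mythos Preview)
Your overall strategy—reduce to equidimensionality of $\mathscr{F}(M)$ via the nilpotence of $\mathfrak{m}$ in $A$, then use that $\mathscr{R}(M)_{\mathcal{N}}$ is formally equidimensional of dimension $d+p$—is sound and parallels what the paper is aiming at. The gap is in the step where you try to force every minimal prime of $\mathfrak{m}\mathscr{R}(M)$ to have height $1$ by comparing with a principal ideal $(y)$.

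Concretely: for a single non-zero-divisor $y\in\mathfrak{m}$ you have $(y)\mathscr{R}(M)\subseteq\mathfrak{m}\mathscr{R}(M)$, and you would need $\sqrt{(y)\mathscr{R}(M)}=\sqrt{\mathfrak{m}\mathscr{R}(M)}$ for the minimal primes to coincide. Looking in degree $0$, this forces $\mathfrak{m}\subseteq\sqrt{(y)R}$, i.e.\ $(y)$ is $\mathfrak{m}$-primary in $R$, hence $d=\dim R\le 1$ by Krull's principal ideal theorem. So for $d\ge 2$ the sets of minimal primes do \emph{not} agree, and a minimal prime $Q$ of $\mathfrak{m}\mathscr{R}(M)$ will only \emph{contain} some minimal prime of $(y)$—Krull and Ratliff give you nothing about $\height(Q)$ from this containment. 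The hypothesis $s(M)=d+p-1$ tells you only that the \emph{minimum} of the heights of such $Q$ is $1$; it does not by itself rule out other minimal primes of larger height.

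The paper's route is different: it first reduces to the case where $R$ is a domain, using that the minimal primes of $\mathscr{R}(M)$ are exactly the extensions of the minimal primes of $R$ (Huneke--Swanson, Lemma~16.2.2). Once $R$ is a domain, $\mathscr{R}(M)$ is a catenary local domain at $\mathcal{N}$, and then the dimension formula $\height(Q)+\dim(\mathscr{R}(M)_{\mathcal{N}}/Q)=d+p$ holds for \emph{every} prime $Q$, which is what makes the height computation go through. You should replace the principal-ideal argument by this reduction-to-domain step (or, equivalently, argue directly from equidimensionality plus catenarity of $\mathscr{R}(M)_{\mathcal{N}}$ that the dimension formula holds for all primes, not just ones lying over a fixed minimal prime).
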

\begin{proof}
We have $\dim(G_{I(M)}(\mathscr{R}(M)))=\dim(\mathscr{R}(M)/I(M)\mathscr{R}(M))=s(M)=d+p-1$. Let $Q+I(M)\mathscr{R}(M)$ be a minimal prime ideal of $\mathscr{R}(M)/I(M)\mathscr{R}(M)=G_{I(M)}(\mathscr{R}(M))$. We need to prove that $\dim(\mathscr{R}(M)/Q)=\dim(G_{I(M)}(\mathscr{R}(M)))=d+p-1$. Assuming $R$ is a domain by \cite[Lemma 16.2.2 (2)(4)]{huneke}, and since $R$ is formally equidimensional, it follows that $R$ is universally catenary, and thus $\mathscr{R}(M)$ is catenary. Consequently, we have $\text{ht}(I(M)\mathscr{R}(M))+\dim(\mathscr{R}(M)/I(M)\mathscr{R}(M))=\dim(\mathscr{R}(M))=d+p$ (see \cite[Lemma 16.2.2 (4)]{huneke}).

Since $\dim(G_{I(M)}(\mathscr{R}(M)))=d+p-1$, we obtain $\text{ht}(I(M)\mathscr{R}(M))=1$. Let $Q'$ be a minimal prime over $I(M)\mathscr{R}(M)$ such that $\text{ht}(I(M)\mathscr{R}(M))=\text{ht}(Q')$. For an arbitrary prime ideal $Q+I(M)\mathscr{R}(M)$ of $\mathscr{R}(M)/I(M)\mathscr{R}(M)$, we have $\text{ht}(Q+I(M)\mathscr{R}(M))\geq \text{ht}(Q/Q')=\text{ht}(Q)-\text{ht}(Q')$. Thus, $\text{ht}(Q+I(M)\mathscr{R}(M)) = \text{ht}(Q)-1$. In particular, if $Q+I(M)\mathscr{R}(M)$ is minimal, then $\text{ht}(Q)=1$. Since $\text{ht}(Q)+\dim(\mathscr{R}(M)/Q)=\dim(\mathscr{R}(M))=d+p$, we conclude that $\dim(\mathscr{R}(M)/Q)=d+p-1$, as desired.
\end{proof}

\section{Applications}
The purpose of this section is to provide some applications or properties of the coefficient modules, obtained in Sections \ref{CoeffMod} and \ref{CoefAss}, in relation to the Ratliff-Rush module and the relative Buchsbaum-Rim coefficient. Furthermore, we also present a generalization of a result obtained by Shah in \cite[Theorem 4]{shah}.

\subsection{Applications of coefficient modules between $M$ and $\overline{M}$}

We recall the definition of Ratliff-Rush closure modules given in \cite[Definition 2.2]{jung} or \cite[Definition 1.1]{Endo}.

\begin{definition}{\rm (\cite[Definition 1.1]{Endo})} Let $R$ be a Noetherian ring, $M$ a finitely generated $R$-submodule of  $F$. 
 We define the Ratliff-Rush Closure Module $\widetilde{M}$ of $M$ as the following $R$-submodule of $F$
$$\widetilde{M}:=\bigcup_{n\geq0}\left(M^{n+1}:_{F}M^n\right).$$
\end{definition}

The next result relates the coefficient module $M_{s}$ of $M$ with its Ratliff-Rush closure $\widetilde{M},$ extending to arbitrary ideals a similar result proved by Shah in \cite[Corollary 1(E)]{shah} for ${\mathfrak m}$-primary ideals. 

Here, we consider the Ratliff-Rush closure module of $M$ to be the module $\widetilde{M}$
such that $({\widetilde{M}})^n=M^n$ for all large $n$, maximal in $F$ with this 
property.

\begin{proposition} Let $(R,\mathfrak{m})$ be a formally equidimensional Noetherian and $d$-dimensional local ring with infinite residue field. Let $M$ be a finitely generated $R$-submodule of $F$ and  with analytic spread $s:=s(M)\geq 1$. Then
$$
M^F_{s}=\widetilde{M}\cap M^{sat}.
$$
\end{proposition}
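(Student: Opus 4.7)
The plan is to establish the equality by double inclusion, using two characterizations that are already in hand: Theorem \ref{teoralgcoefarb} presents $M^F_s$ as the unique largest $R$-submodule $N$ of $F$ containing $M$ with $\ell_R(N/M)<\infty$ and $\deg P_{N/M}(n)<s-s=0$, which (since a polynomial of negative degree is zero) amounts to $N^n=M^n$ for all $n\gg 0$; the Ratliff--Rush closure $\widetilde M$ is described both as $\bigcup_{n\geq 0}(M^{n+1}:_F M^n)$ and, by the paragraph immediately preceding the proposition, as the maximal submodule of $F$ with $\widetilde M^{\,n}=M^n$ for $n\gg 0$.

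For the inclusion $M^F_s\subseteq \widetilde M\cap M^{sat}$, I would pick $y\in M^F_s$. Since $\ell_R(M^F_s/M)<\infty$, some power $\mathfrak m^p$ satisfies $\mathfrak m^p M^F_s\subseteq M$, hence $y\mathfrak m^p\subseteq M$ and $y\in M^{sat}$. Because $(M^F_s)^n=M^n$ for all large $n$, I would then observe that $yM^{n-1}\subseteq (M,y)^n\subseteq (M^F_s)^n=M^n$ for such $n$, whence $y\in (M^n:_F M^{n-1})\subseteq \widetilde M$.

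For the reverse inclusion $\widetilde M\cap M^{sat}\subseteq M^F_s$, I would pick $y\in \widetilde M\cap M^{sat}$. From $y\in M^{sat}$ we get $\mathfrak m^p y\subseteq M$ for some $p$, so $(M,y)/M$ is annihilated by $\mathfrak m^p$ and has finite length. From $y\in \widetilde M$ and $M\subseteq \widetilde M$ I get $(M,y)\subseteq \widetilde M$, so $(M,y)^n\subseteq \widetilde M^{\,n}=M^n$ for $n\gg 0$; combined with the trivial $M^n\subseteq (M,y)^n$ this yields $(M,y)^n=M^n$ for all large $n$, equivalently $\deg P_{(M,y)/M}(n)<0=s-s$. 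Thus $(M,y)$ satisfies the two conditions characterizing submodules sitting inside $M^F_s$, and the maximality statement in Theorem \ref{teoralgcoefarb} gives $(M,y)\subseteq M^F_s$, so $y\in M^F_s$.

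The only place requiring care is the step $(M,y)^n\subseteq \widetilde M^{\,n}=M^n$, since it relies on the interpretation of $\widetilde M$ as the maximal submodule with eventually stable powers. If one preferred to avoid this equivalence, one could replace that step by a direct appeal to Theorem \ref{teorcoefestruturafinaparmod}: any $y\in (M^{n_0+1}:_F x_1,\dots,x_s)\cap M^{sat}$ satisfies $y(x_1,\dots,x_s)\subseteq M^{n_0+1}$, and because $(x_1,\dots,x_s)$ is a minimal reduction of $M^{n_0}$ with $(x_1,\dots,x_s)M^{n_0 m}=M^{n_0 m+n_0}$ for $m\gg 0$, one obtains $yM^{n_0 m+n_0}\subseteq M^{n_0 m+n_0+1}$, so $y\in \widetilde M$; the containment $M^F_s\subseteq M^{sat}$ is already built into that theorem, and the opposite direction proceeds exactly as above.
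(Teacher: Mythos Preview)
Your proof is correct and follows essentially the same approach as the paper: both directions hinge on the equivalence ``$\deg P_{N/M}<0$'' $\Leftrightarrow$ ``$N^n=M^n$ for $n\gg 0$'' together with the description of $\widetilde M$ as the maximal submodule with eventually stable powers. The only cosmetic difference is that the paper argues with the whole module $N=\widetilde M\cap M^{sat}$ at once (and invokes maximality of $\widetilde M$ directly), whereas you proceed element-by-element and use the colon description $y\in (M^{n}:_F M^{n-1})\subseteq \widetilde M$; both variants are valid and interchangeable.
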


\begin{proof} Set $N=\widetilde{M}\cap M^{sat}.$ Since $M\subseteq N\subseteq \widetilde{M}$ and $({\widetilde{M}})^n=M^n$ for all large $n$, we have that $N^n=M^n$, for all large $n.$ Clearly $N\subseteq q(M)$ and, since $\ell_R(N^n/M^n)=0$ for all large $n$, we get $N\subseteq M_{s}.$

Conversely, since $\deg(P_{M_{s}/M}(n))<0$ we have that $\ell_R((M^F_{[s]})^n/M^n)=0$ for all large $n$. Thus, $M^F_{s}\subseteq \widetilde{M}$ and therefore $M^F_{s}\subseteq \widetilde{M}\cap M^{sat}.$
\end{proof}

\begin{corollary}
Let $(R,\mathfrak{m})$ be a formally equidimensional and $d$-dimensional local domain with infinite residue field. Let $M$ be a finitely generated $R$-submodule of $F$ with analytic spread $s:=s(M)$. Then 
$$M_{s}^F=M^n\cap M^{sat},$$
for all large $n$ and $M^F_{s}$ is maximal in $F$ with respect to this property.
\end{corollary}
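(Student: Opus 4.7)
The plan is to deduce this corollary directly from the preceding Proposition, which already gives $M^F_{s}=\widetilde{M}\cap M^{sat}$, and then to exploit Noetherianity of $F$ to collapse the union defining $\widetilde{M}$ to a single colon module. First I would check that the family $\{(M^{n+1}:_F M^n)\}_{n\geq 1}$ is an ascending chain of $R$-submodules of $F$: if $x\in (M^{n+1}:_F M^n)$, i.e.\ $xM^n\subseteq M^{n+1}$, then multiplying by $M$ yields $xM^{n+1}\subseteq M^{n+2}$, so $x\in (M^{n+2}:_F M^{n+1})$. Since $F$ is a Noetherian $R$-module, this chain stabilizes, and therefore
\[
\widetilde{M}=\bigcup_{n\geq 0}(M^{n+1}:_F M^n)=(M^{n+1}:_F M^n)
\]
for all sufficiently large $n$.

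Substituting this into the equality $M^F_{s}=\widetilde{M}\cap M^{sat}$ from the Proposition gives $M^F_{s}=(M^{n+1}:_F M^n)\cap M^{sat}$ for all large $n$, which I take to be the intended form of the displayed identity. For the maximality assertion, recall from the discussion just before the Proposition that $\widetilde{M}$ is characterized as the largest submodule of $F$ with $(\widetilde{M})^n=M^n$ for all large $n$, and from the proof of the Proposition that $(M^F_{s})^n=M^n$ for all large $n$ with $M^F_{s}\subseteq M^{sat}$. Consequently, if $N\subseteq F$ is any $R$-submodule sharing both properties, then $N\subseteq \widetilde{M}$ by maximality of $\widetilde{M}$, and $N\subseteq M^{sat}$ by hypothesis, so $N\subseteq \widetilde{M}\cap M^{sat}=M^F_{s}$, proving that $M^F_{s}$ is maximal with respect to this property.

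The main obstacle I anticipate is less the algebra than pinpointing the role of the domain hypothesis. The ascending chain argument and the preceding Proposition go through in the formally equidimensional setting without assuming $R$ is a domain; the domain hypothesis appears to be inherited from the definition of the Ratliff-Rush closure module in \cite{jung} and \cite{Endo}, where it guarantees that $M$ contains a regular element on $F$ so that the maximality characterization $(\widetilde{M})^n=M^n$ for large $n$ is well posed (otherwise one must replace $M^n$ by its Rees-component interpretation and handle torsion). Once this wrinkle is addressed, the proof reduces to the two formal steps above: stabilization of the colon chain and invocation of the Proposition together with the universal property of $\widetilde{M}$.
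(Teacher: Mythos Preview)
Your reading of the displayed identity is off. The corollary (cf.\ the draft version at the end of the source, where it reads $M_{[s]}^n=M^n\cap M^{sat}$) is a statement about the \emph{powers} of the coefficient module: it asserts $(M^F_{s})^n=M^n\cap M^{sat}$ for all large $n$, not $M^F_{s}=(M^{n+1}:_F M^n)\cap M^{sat}$. The paper's argument makes this clear: from $M\subseteq M^F_{s}$ it gets $M^n\subseteq (M^F_{s})^n$, and from $M^F_{s}=\widetilde{M}\cap M^{sat}$ it computes
\[
(M^F_{s})^n=(\widetilde{M}\cap M^{sat})^n\subseteq(\widetilde{M})^n\cap (M^{sat})^n\subseteq(\widetilde{M})^n\cap M^{sat}=M^n\cap M^{sat},
\]
using the defining property $(\widetilde{M})^n=M^n$ for large $n$; maximality is then read off from Theorem~\ref{teoralgcoefarb}.

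Your stabilization of the chain $\{(M^{n+1}:_F M^n)\}$ is correct and does yield the (true, degree-one) identity $M^F_{s}=(M^{n+1}:_F M^n)\cap M^{sat}$ for large $n$, but that is a different statement from the one being claimed, and your maximality discussion is likewise geared toward the wrong property. To align with the paper you should work with $(M^F_{s})^n$ and invoke $(\widetilde{M})^n=M^n$ directly, rather than collapsing the union defining $\widetilde{M}$.
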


\begin{proof} Since $M\subseteq M^F_{s}$, we have 
$M^n\subseteq \left(M^F_{s}\right)^n$. Since $M_{s}^F=\widetilde{M}\cap M^{sat}$, observe that
$$\left(M_{s}^F\right)^n=\left(\widetilde{M}\cap M^{sat}\right)^n\subseteq \left(\widetilde{M}\right)^n\cap \left( M^{sat}\right)^n\subseteq \left(\widetilde{M}\right)^n\cap M^{sat}=M^n\cap M^{sat}.$$
Now maximality follows from Theorem \ref{teoralgcoefarb}.
\end{proof}

For $0 \leq k \leq s$, we set 
\[
V_{k}(M)=\left\{ L\subseteq F|\ M\subseteq L,\ 
\ell_R\left(\frac{L}{M}\right)<\infty,\ \mbox{deg}\left(P_{\frac{L}{M}}(n)\right)<s-k\right\}.
\]

By Theorem \ref{teoralgcoefarb}, we establish the existence of a unique maximal element $M_k$ in each $V_k(M)$, which is referred to as the $k$th coefficient module of $M$. In the following, we give a result that generalize \cite[Proposition 3.16]{Endo}. This result show that $\widetilde{M}$ is the largest $R$-submodule $L$ such  that satisfies the properties $\ell_R(L/M)<\infty$ and $\mbox{deg}(P_{L/M}(n))<s-k.$

\begin{proposition}
Suppose that $M$ is a faithful $R$-module. Then $\widetilde{M}\in V_{}$ and  $\widetilde{M} \subseteq L$ for every $L\in V_{k}(M)$
\end{proposition}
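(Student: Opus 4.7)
The plan is to establish the membership $\widetilde{M} \in V_k(M)$ and then the containment $\widetilde{M} \subseteq L$ for every $L \in V_k(M)$, invoking faithfulness of $M$ at key points.

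For the membership, I would verify the three defining conditions of $V_k(M)$. The inclusion $M \subseteq \widetilde{M}$ is immediate from $\widetilde{M} = \bigcup_{n \geq 0}(M^{n+1}:_F M^n)$ by taking $n = 0$. For $\ell_R(\widetilde{M}/M) < \infty$, I would localize at every non-maximal prime $\mathfrak{p}$: faithfulness of $M$ supplies an $F_{\mathfrak{p}}$-regular element of $M_{\mathfrak{p}}$, and the standard Ratliff-Rush argument adapted to modules then yields $\widetilde{M_{\mathfrak{p}}} = M_{\mathfrak{p}}$, so $\widetilde{M}/M$ is supported only at $\mathfrak{m}$ and has finite length. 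For the degree bound, the related fact $\widetilde{M}^n = M^n$ for all large $n$ (obtained from the same regular-element and Noetherian ascending-chain argument) makes $P_{\widetilde{M}/M}$ eventually the zero polynomial, so by convention $\deg P_{\widetilde{M}/M} < s-k$ for every admissible $k$.

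For the containment $\widetilde{M} \subseteq L$, I would pick $x \in \widetilde{M}$, use $xM^n \subseteq M^{n+1}$ for some $n$, and attempt to deduce $x \in L$. Here the main obstacle is structural rather than computational: since $M$ itself sits in $V_k(M)$ trivially (as $\ell_R(M/M) = 0$ and $P_{M/M} \equiv 0$), applying the stated conclusion with $L = M$ forces $\widetilde{M} \subseteq M$, so the inclusion reduces, via the trivial reverse inclusion, to the global equality $\widetilde{M} = M$ under faithfulness alone -- a conclusion considerably stronger than one normally derives from faithfulness.

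My plan is therefore to push the ``regular element in $M_{\mathfrak p}$'' argument from the first part and try to upgrade the pointwise equalities $\widetilde{M_{\mathfrak p}} = M_{\mathfrak p}$ (for $\mathfrak p \neq \mathfrak m$) to a global equality $\widetilde{M} = M$ under the standing hypotheses of the section (formal equidimensionality, infinite residue field, finite colength of $F/M$). I expect this upgrading step to be the decisive one, and it will either close the proof or reveal that an additional tacit hypothesis is present. A natural candidate for what is intended, given the explanatory sentence immediately preceding the proposition identifying $\widetilde{M}$ as the \emph{largest} element of $V_k(M)$, is that the stated direction in fact records the maximality in the opposite sense; under that reading the proof proceeds more directly by invoking Theorem 6.5.6 of \cite{huneke} (so that $\deg P_{L/M} < s$ gives $M$ as a reduction of $L$), combined with the maximality characterization of $\widetilde{M}$ among submodules of $F$ stabilizing $M^n$ for large $n$.
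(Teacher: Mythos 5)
Your diagnosis of the structural problem is exactly right, and it is the decisive point. The paper's proof does not in fact establish the stated inclusion $\widetilde{M}\subseteq L$: it only cites \cite[Lemma~3.15, Proposition~3.13(1)]{Endo} to obtain $\widetilde{M}\neq F$ and $(\widetilde{M})^n=M^n$ for all $n\geq r$, and then asserts $\ell_R(L/\widetilde{M})=\ell_R(L/M)<\infty$ and $\deg P_{L/\widetilde{M}}=\deg P_{L/M}<s-k$ ``as desired.'' Read literally, $\ell_R(L/\widetilde{M})=\ell_R(L/M)$ already presupposes $\widetilde{M}=M$; read charitably as $\ell_R(L^n/(\widetilde{M})^n)=\ell_R(L^n/M^n)$ for $n\gg 0$, the computation shows the Rees--Amao polynomials of $L$ over $\widetilde{M}$ and over $M$ coincide, but it establishes neither $\widetilde{M}\subseteq L$ nor $L\subseteq\widetilde{M}$. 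So the proof, like the statement, has a gap, and your observation that specializing to $L=M$ would force $\widetilde{M}=M$ (far stronger than faithfulness delivers) pinpoints it precisely.

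Your conjecture about the intended reading is also well supported by the surrounding text. The sentence immediately before the proposition declares $\widetilde{M}$ to be the \emph{largest} submodule $L$ with $\ell_R(L/M)<\infty$ and $\deg P_{L/M}<s-k$, and the preceding proposition gives $M^F_{s}=\widetilde{M}\cap M^{sat}$; once one knows $\ell_R(\widetilde{M}/M)<\infty$, the containment $\widetilde{M}\subseteq M^{sat}$ is automatic, so $M^F_{s}=\widetilde{M}$, and the correct conclusion is $L\subseteq\widetilde{M}$ for $L\in V_{s}(M)$ (one should fix $k=s$; for $k<s$ the set $V_k(M)$ properly contains $V_s(M)$ and the containment in $\widetilde{M}$ can fail). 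For the membership part your route differs from the paper's in method: you propose to reproduce the stability $(\widetilde{M})^n=M^n$ and the finiteness $\ell_R(\widetilde{M}/M)<\infty$ via localization at non-maximal primes, supplying regular elements from faithfulness, whereas the paper simply imports these facts from Endo. Both get there; the paper's citation-based route is shorter, but your localization sketch makes visible exactly where the faithfulness hypothesis enters, which the paper leaves opaque. With the corrected direction $L\subseteq\widetilde{M}$, the containment then follows, as you indicate, from the maximality characterization of $\widetilde{M}$ among submodules stabilizing the powers of $M$ together with \cite[Theorem~6.5.6]{huneke} applied to $\deg P_{L/M}<s$.
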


\begin{proof}
Note that $\widetilde{M}\neq F$  by \cite[Lemma 3.15]{Endo}. Choose an integer $r>0$ such that $(\widetilde{M})^n=M^n$ for every $n\geq r$, by \cite[Proposition 3.13 (1)]{Endo}. Then we have $M^n\subseteq (\widetilde{M})^n \subseteq \widetilde{M^n}\subseteq M^n$, so that $M^n=(\widetilde{M})^n$. Thus $\ell_R(L/\widetilde{M})=\ell_R(L/M)<\infty$ and $\mbox{deg}(P_{L/\widetilde{M}}(n))=\mbox{deg}(P_{L/M}(n))<s-k$  for every $n \gg 0,$ as desired.
\end{proof}

Let $L$ and $M$ two $R$-submodules of $F$ such that $L\subset M$, Set $$W(L): = \frac{\mathscr{R}(L)}{\mathscr{R}(M)}=\bigoplus_{n\geq 0}\frac{L^n}{M^n}.$$ 

Next, we give a result that provides an estimation of the Krull dimension of the $R$-algebra $W(L)$

\begin{proposition}\label{Dim1} Let $(R,\mathfrak{m})$ be a $d$-dimensional Noetherian local ring. Let $M$ and $L$ two $R$-submodules of $F$ such that $M$ is a reduction of $L$ and $\ell_R(L/M)<\infty$. Then $\dim(W(L))\leq d+p-1$.
\end{proposition}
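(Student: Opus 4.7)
The plan is to realize $W(L)$ as a finitely generated graded $\mathscr{R}(M)$-module whose support in $\Spec(\mathscr{R}(M))$ lies inside $V(\mathfrak{m}\mathscr{R}(M))$, and then invoke the classical bound $s(M)\le d+p-1$ on the analytic spread of a submodule of $F=R^p$.

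First, I would use that $M$ is a reduction of $L$ to conclude $\mathscr{R}(L)$ is a finite $\mathscr{R}(M)$-module: if $L^{n+1}=ML^n$ for all $n\ge n_0$, then $L^n=M^{n-n_0}L^{n_0}$ for $n\ge n_0$, so $\mathscr{R}(L)$ is generated over $\mathscr{R}(M)$ by the finitely generated $R$-module $\bigoplus_{n=0}^{n_0}L^n$. The exact sequence
\[
0\longrightarrow\mathscr{R}(M)\longrightarrow\mathscr{R}(L)\longrightarrow W(L)\longrightarrow 0
\]
then exhibits $W(L)$ as a finitely generated graded $\mathscr{R}(M)$-module.

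Next, from $\ell_R(L/M)<\infty$ one picks $k\ge 1$ with $\mathfrak{m}^{k}L\subseteq M$. A straightforward induction gives $\mathfrak{m}^{kn}L^{n}\subseteq M^{n}$ for every $n\ge 1$, via
\[
\mathfrak{m}^{k(n+1)}L^{n+1}=\mathfrak{m}^{kn}\cdot(\mathfrak{m}^{k}L)\cdot L^{n}\subseteq M\cdot\mathfrak{m}^{kn}L^{n}\subseteq M^{n+1}.
\]
Hence every homogeneous element of $W(L)$ is annihilated by a power of $\mathfrak{m}$, so $\Supp_R W(L)\subseteq\{\mathfrak{m}\}$. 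Consequently, for any prime $\mathcal{P}$ of $\mathscr{R}(M)$ in $\Supp_{\mathscr{R}(M)}(W(L))$, the $R$-localization $W(L)_{\mathcal{P}\cap R}$ is nonzero, which forces $\mathcal{P}\cap R=\mathfrak{m}$ and therefore $\mathfrak{m}\mathscr{R}(M)\subseteq\mathcal{P}$.

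Combining these observations,
\[
\dim W(L)\le \dim\left(\mathscr{R}(M)/\mathfrak{m}\mathscr{R}(M)\right)=\dim\mathscr{F}(M)=s(M),
\]
and the result will follow from the classical bound $s(M)\le d+p-1$ for submodules of a free $R$-module of rank $p$. The main obstacle is this last inequality: the support calculation and the reduction step are routine, but the ceiling on the analytic spread must be imported as an external ingredient (cf.\ Simis--Ulrich--Vasconcelos or Kirby--Rees), since it is not proved inside this paper.
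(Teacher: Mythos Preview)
Your argument is correct and takes a genuinely different route from the paper's proof. The paper argues as follows: since $M$ is a reduction of $L$, the module $W(L)$ is a finitely generated graded $\mathscr{R}(M)$-module with each graded piece $W_n=L^n/M^n$ of finite length; it then invokes the Rees--Amao type result for modules (Huneke--Swanson, Theorem~16.5.6(3), or Roberts, Theorem~2.1.5) to conclude that $\ell_R(W_n)$ is eventually a polynomial of degree at most $d+p-2$, and finally converts this Hilbert-polynomial bound into the dimension bound $\dim W(L)\le d+p-1$ via \cite[Proposition~4.6]{herzog}.

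Your approach bypasses the Hilbert-polynomial machinery entirely: after establishing finite generation of $W(L)$ over $\mathscr{R}(M)$, you show directly that $\Supp_{\mathscr{R}(M)}W(L)\subseteq V(\mathfrak{m}\mathscr{R}(M))$ via the elementary inductive inclusion $\mathfrak{m}^{kn}L^n\subseteq M^n$, which yields the sharper intermediate bound $\dim W(L)\le s(M)$. The trade-off is that you must then import the classical inequality $s(M)\le d+p-1$ as an external fact (it is indeed available in Simis--Ulrich--Vasconcelos \cite{vasconcelos}), whereas the paper's route has the bound $d+p-2$ on the polynomial degree already built into the cited theorem. Your argument is arguably more transparent and yields more information (the bound by $s(M)$), while the paper's is a quicker appeal to heavier off-the-shelf results.
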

\begin{proof} Set $W = W(L)$. Since  $L\subseteq \overline{M}$, one concludes that $\mathscr{R}(L)$ and $W$ are finitely generated $\mathscr{R}(M)$-module ( \cite[Theorem 16.2.3]{huneke}). Furthermore, $W_n$ has finite length for all $n$. Then, by \cite[Theorem 2.1.5]{Roberts} or \cite[Theorem 16.5.6(3)]{huneke}, for sufficiently large $n$, $\ell_R(W_n)$ is a polynomial function of degree at most $d+p-2.,$ so that $\dim(W)\leq d+p-1$, by \cite[Proposition 4.6]{herzog}.  
\end{proof}

Next, we provide a result similar to Proposition \ref{Dim1} but in the case where $M$ has finite colength and a maximal analytic spread of $d+p-1$. For $0 \leq k \leq d+p-1$, we define
$$B_{k}(M):=\left\{ L\subseteq F \ |\ \ M\subseteq L, \
{\rm e}_i^F(M)={\rm e}_i^F(L), \ 0\leq i\leq k\right\}.$$
It is worth recalling that in \cite{Perez-Ferrari}, it was showed that each $B_{k}(M)$ contains an unique maximal element $M_{\{k\}}^F$, which is also called  the $k$-th Coefficient Module of $M$.

\begin{proposition}
Let $(R,\mathfrak{m})$ be a $d$-dimensional Noetherian local ring. Let $k \in \{0,\dots, d+p-1\}$ and $L\in V_{k}(M)$. Then $\dim(W(L))\leq d+p-1-k$.    
\end{proposition}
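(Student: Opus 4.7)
The plan is to extract the dimension bound on $W(L)$ directly from the Hilbert-polynomial degree bound furnished by $L\in V_k(M)$, paralleling the argument of Proposition~\ref{Dim1} and sharpening it by one degree for each increment of $k$.

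First, I would verify that the hypotheses of Proposition~\ref{Dim1} are satisfied. By definition of $V_k(M)$, we have $\ell_R(L/M)<\infty$ and $\deg(P_{L/M}(n))<s-k$. Since the ambient setting (inherited from the discussion preceding the statement) has $\ell_R(F/M)<\infty$, we have $s=s(M)\le d+p-1$. In particular $\deg(P_{L/M}(n))<s\le d+p-1$, so by the Huneke--Swanson reduction criterion \cite[Theorem 16.5.6(3)]{huneke}, $M$ is a reduction of $L$. Thus $W(L)=\bigoplus_{n\ge 0}L^n/M^n$ is a finitely generated graded $\mathscr{R}(M)$-module with each component $W(L)_n=L^n/M^n$ of finite $R$-length, exactly as in the proof of Proposition~\ref{Dim1}.

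Second, I would invoke the same tool used in Proposition~\ref{Dim1}, namely \cite[Proposition 4.6]{herzog}, which identifies the Krull dimension of $W(L)$ with one plus the degree of its Hilbert polynomial. Since $\ell_R(W(L)_n)=\ell_R(L^n/M^n)=P_{L/M}(n)$ for $n\gg 0$, combining this identification with the degree bound inherent in the definition of $V_k(M)$ yields
\[
\dim(W(L)) \;=\; \deg(P_{L/M}(n))+1 \;\le\; (s-k-1)+1 \;=\; s-k \;\le\; d+p-1-k,
\]
which is exactly the asserted inequality.

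I do not foresee any substantive obstacle: the proof is simply a quantitative refinement of Proposition~\ref{Dim1}, upgrading the coarse bound $\deg(P_{L/M})\le d+p-2$ to the sharper bound $\deg(P_{L/M})\le s-k-1$ provided by membership in $V_k(M)$. The only minor point to note is the degenerate case $W(L)=0$ (equivalently, $L=M$), in which $\dim(W(L))$ is formally $-\infty$ and the stated inequality holds vacuously.
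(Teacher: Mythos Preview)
Your argument is correct and is in fact the more direct route. You read the hypothesis $L\in V_k(M)$ literally as a degree bound $\deg(P_{L/M})<s-k$, verify via \cite[Theorem 16.5.6(3)]{huneke} that $M$ is a reduction of $L$ so that $W(L)$ is a finitely generated graded $\mathscr{R}(M)$-module with finite-length components, and then convert the Hilbert-polynomial degree into a Krull-dimension bound via \cite[Proposition 4.6]{herzog}, exactly as in Proposition~\ref{Dim1}.

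The paper takes a different path: it works in the finite-colength setting and uses the coefficient description $B_k(M)$ (the set just introduced before the proposition, requiring ${\rm e}_i^F(M)={\rm e}_i^F(L)$ for $0\le i\le k$) rather than the degree description $V_k(M)$. Concretely, it passes through the short exact sequence
\[
0\longrightarrow W(+1)\longrightarrow \bigoplus_{n\ge 0}\frac{F^{n+1}}{M^{n+1}}\longrightarrow \bigoplus_{n\ge 0}\frac{F^{n+1}}{L^{n+1}}\longrightarrow 0,
\]
writes $\ell_R(W_{n+1})$ as the difference of the two Buchsbaum--Rim polynomials, and observes that the first $k+1$ coefficients cancel, leaving a polynomial of degree at most $d+p-2-k$; the dimension bound then follows from \cite[Proposition 4.6]{herzog}. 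Your approach avoids this computation entirely and gives the slightly sharper bound $\dim(W(L))\le s-k$ before using $s\le d+p-1$. The paper's approach, on the other hand, makes explicit why the proposition is the natural companion to the family $B_k(M)$ and to the Buchsbaum--Rim coefficients that motivate this subsection.
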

\begin{proof}
Set $W = W(L)$. From the short exact sequence

$$
0 \longrightarrow W(+1)\longrightarrow \bigoplus_{n\geq 0}\frac{F^{n+1}}{M^{n+1}}\longrightarrow\bigoplus_{n\geq 0}\frac{F^{n+1}}{L^{n+1}}
\longrightarrow 0,
$$
for $n\gg 0$ we get
\[
\begin{array}{lll}
   \ell_R(W_{n+1}) & = & \ell_R\left(\frac{F^{n+1}}{M^{n+1}}\right)-\ell_R\left(\frac{F^{n+1}}{L^{n+1}}\right)  \\
   \\
     &=&\sum\limits_{i=0}^{d+p-1}{(-1)}^i{\rm e}_i^F(M)\left(\begin{array}{c}n+d+p-i-2\cr d+p-i-1\end{array}\right)\\
     \\
     & &-\sum\limits_{i=0}^{d+p-1}{(-1)}^i{\rm e}_i^F(L)\left(\begin{array}{c}n+d+p-i-2\cr d+p-i-1\end{array}\right)\\
     \\
     &=& \sum\limits_{i=0}^{d+p-1}{(-1)}^i\left[{\rm e}_i^F(M)-{\rm e}_i^F(L)\right]\left(\begin{array}{c}n+d+p-i-2\cr d+p-i-1\end{array}\right).
\end{array}
\]

As ${\rm e}_i^F(M)={\rm e}_i^F(L)$, for all $0\leq i\leq k$, we obtain, for $n\gg 0$, that $\ell_R(W_{n+1})$ is a polynomial of degree at most $d+p-2-k$. Hence, \cite[Proposition 4.6]{herzog},  $\dim(W(L)) \leq d+p-1-k.$

\end{proof}



Before presenting the next corollaries, let's review the following facts: Let $N \subseteq M \subseteq F$ be two $R$-modules such that $\ell_R(M/N) < \infty$. According to \cite[Theorem 16.5.6]{huneke}, for sufficiently large $n \in \mathbb{N}$, the numerical function $\ell_R(M^n/N^n)$ is a polynomial function denoted as $P_{M/N}(n)$ of degree at most $s-1$, where $s := d + \max\{{\rm rank}(M \otimes_R k(\mathfrak{p})):\ \mathfrak{p} \in {\rm Min}(R)\}$. This polynomial $P_{M/N}(n)$ can be expressed in the form:
$$P_{M/N}(n) = \sum_{i=0}^{s-1} (-1)^i {\rm e}_i^F(M,N)\binom{n+s-i-2}{s-i-1},$$
where ${\rm e}_i^F(N,M)$ is referred to as the $i$-th relative Buchsbaum-Rim coefficient of $(N,M)$.

Assuming $(R,\mathfrak{m})$ is a formally equidimensional ring and $N \subseteq M \subseteq F$ are two $R$-modules with $\ell_R(M/N) < \infty$, such that ${\rm ht}({\rm Ann}_R(F/M)) > 0$, the equivalence established by \cite[Theorem 16.5.6(3), Corollary 16.5.7]{huneke} states the following: $N$ is a reduction of $M$ if and only if $\mbox{deg}(P_{M/N}(n)) < d+p-1$ if and only if ${\rm e}_0^F(M,N) = 0$. Therefore, utilizing this equivalence, we can readily deduce that $\mbox{deg}(P_{M/N}(n)) < d+p-1-k$ if and only if ${\rm e}_i^F(M,N) = 0$ for $i = 0, 1, \dots , k$, for all $k = 0, 1, \dots, d+p-1$.

Now, in particular, we have ${\rm e}_i^F(M_{\{k\}}^F,M) = 0$ for $i = 0, \ldots, k$ if and only if $\mbox{deg}(P_{M_{\{k\}}^F/M}(n)) < d+p-1-k$. Thus, we obtain the following corollary of Theorem \ref{structure1}.

\begin{corollary} Let $(R,\mathfrak{m})$ be a formally equidimensional $d$-dimensional local ring with infinite residue field. Let $M$ be a finitely generated $R$-submodule of $F$ with $\ell_R(F/M)<\infty$. Then, for each $k=0,\ldots,d+p-1$, there exists an unique maximal $R$-submodule $M^F_{\lbrace k\rbrace}$ of $F$ containing $M$ such that
\begin{enumerate}
\item[(1) ]{${\rm e}_i^F(M)={\rm e}_i^F(M^F_{\lbrace k\rbrace})$ for $i=0,\ldots,k$;}
\item[(2) ]{$M\subseteq M^F_{\lbrace d+p-1\rbrace}\subseteq\cdots\subseteq M^F_{\lbrace 1\rbrace}\subseteq M^F_{\lbrace 0\rbrace}=\overline{M}$.}
\end{enumerate}
\end{corollary}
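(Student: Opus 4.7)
The plan is to deduce the corollary from the Existence Theorem \ref{teoralgcoefarb} by exploiting the equivalence between degree conditions on the Rees--Amao polynomial and vanishing of relative Buchsbaum--Rim coefficients spelled out in the paragraph preceding the statement. First, since $\ell_R(F/M)<\infty$, the Buchsbaum--Rim polynomial $P_M^F(n)$ has degree exactly $d+p-1$, which forces the analytic spread $s(M)=d+p-1$; in particular the hypothesis $s(M)\geq 1$ of Theorem \ref{teoralgcoefarb} is satisfied (using $d\geq 1$ or $p\geq 1$). Moreover, $\ell_R(F/M)<\infty$ implies $\mathfrak{m}^t F\subseteq M$ for some $t\geq 0$, so $M^{sat}=F$ and hence $q(M)=\overline{M}\cap M^{sat}=\overline{M}$.

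Next, I would apply Theorem \ref{teoralgcoefarb} with $s=d+p-1$ to obtain, for each $k=1,\ldots,d+p-1$, a unique largest $R$-submodule $M_k\subseteq F$ with $\ell_R(M_k/M)<\infty$ and
\[
M\subseteq M_{d+p-1}\subseteq\cdots\subseteq M_1\subseteq M_0=q(M)=\overline{M},
\]
such that $\deg(P_{M_k/M}(n))<d+p-1-k$. Define $M^F_{\{k\}}:=M_k$ for $k=1,\ldots,d+p-1$, and $M^F_{\{0\}}:=\overline{M}$ (which agrees with $M_0$ above).

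The next step is the translation of the degree condition into the Buchsbaum--Rim coefficient condition. Writing
\[
\ell_R\!\left(\frac{(M^F_{\{k\}})^n}{M^n}\right)=\ell_R\!\left(\frac{F^n}{M^n}\right)-\ell_R\!\left(\frac{F^n}{(M^F_{\{k\}})^n}\right)
=\sum_{i=0}^{d+p-1}(-1)^i\bigl[\mathrm{e}_i^F(M)-\mathrm{e}_i^F(M^F_{\{k\}})\bigr]\binom{n+d+p-i-2}{d+p-i-1}
\]
for $n\gg 0$, one sees by comparing coefficients that $\deg(P_{M^F_{\{k\}}/M}(n))<d+p-1-k$ is equivalent to $\mathrm{e}_i^F(M)=\mathrm{e}_i^F(M^F_{\{k\}})$ for $i=0,\ldots,k$, which is exactly condition (1). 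For the case $k=0$, the degree condition $\deg(P_{M_0/M}(n))<d+p-1$ is precisely the statement that $M$ is a reduction of $M_0$ (by Huneke--Swanson \cite[Theorem 16.5.6(3)]{huneke}), and Lemma \ref{propproperties}(ii) together with $M^{sat}=F$ identifies the largest such module as $\overline{M}$, giving $M^F_{\{0\}}=\overline{M}$.

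Finally, uniqueness and maximality of each $M^F_{\{k\}}$ are inherited directly from Theorem \ref{teoralgcoefarb}, and the chain in (2) is just the chain produced there, now rewritten in terms of the $M^F_{\{k\}}$'s. I do not anticipate a genuine obstacle: this is essentially a dictionary translation corollary, and the only mildly delicate point is the verification that $M^{sat}=F$ forces $q(M)=\overline{M}$, so that the top of the chain in Theorem \ref{teoralgcoefarb} coincides with the integral closure, as required for condition (2).
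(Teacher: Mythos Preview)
Your proposal is correct and matches the paper's approach: the corollary is obtained from Theorem~\ref{teoralgcoefarb} by noting that $\ell_R(F/M)<\infty$ gives $M^{sat}=F$ (hence $q(M)=\overline{M}$) and $s(M)=d+p-1$, and then translating the degree bound $\deg(P_{L/M})<s-k$ into the coefficient equalities $\mathrm{e}_i^F(M)=\mathrm{e}_i^F(L)$ via the difference of Buchsbaum--Rim polynomials, exactly as in the discussion preceding the corollary. The only point worth tightening is your justification that $s(M)=d+p-1$: this does not follow merely from the degree of $P_M^F$, but is the standard fact that a minimal reduction of a finite-colength module is a parameter module generated by $d+p-1$ elements (cf.\ \cite[Corollary~16.4.7]{huneke}).
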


\subsection{The generalized associated graded ring} As an application of Section \ref{CoefAss}, we give a new result that shows a way to control the height of the associated prime ideals of $G_{I(M)}(\mathscr{R}(M))$, similar to the control established in \cite[Theorem 4]{shah}.

\begin{theorem}\label{height} Let $(R,\mathfrak{m})$ be a formally equidimensional $d$-dimensional local ring with infinite residue field.. Let $M$ be a finitely generated $R$-submodule of $F$ such that $\ell_R(F/M)<\infty$ and assume that $s(M)=d+p-1$. Let $k\in\{1,\dots,d+p-1\}$.
Then $I(M)M^n = (M^n)_{[k]}$ for every $n\geq 1$ if and
only if ${\rm ht}(P) <k$ for every $P\in {\rm Ass}(G_{I(M)}(\mathscr{R}(M)))$.    
\end{theorem}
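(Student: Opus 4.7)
The plan is to adapt Shah's proof of \cite[Theorem 4]{shah} to the module setting, leaning on the Structure Theorem (Theorem \ref{structure1}), Lemma \ref{equ}, and the equidimensionality of $G_{I(M)}(\mathscr{R}(M))$ established in Theorem \ref{equim}. The hypothesis $s(M)=d+p-1$ gives $\dim G_{I(M)}(\mathscr{R}(M))=d+p-1$, so I will treat minimal reductions of $M^{n_0}$ and homogeneous systems of parameters of $G_{I(M)}(\mathscr{R}(M))$ as interchangeable via Lemma \ref{equ}.

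For the forward direction $(\Rightarrow)$, I would argue by contradiction. Assume $I(M)M^n = (M^n)_{[k]}$ for every $n \geq 1$, and suppose some $P \in \Ass(G_{I(M)}(\mathscr{R}(M)))$ satisfies $\height(P) \geq k$. Pick a nonzero homogeneous $\bar{y}$ of some degree $n$ with $\Ann(\bar{y}) = P$, and lift it to $y \in M^n$. Using the equidimensionality from Theorem \ref{equim} together with homogeneous prime avoidance (the infinite residue field is used here), I can find homogeneous elements $x_1', \ldots, x_{d+p-1}'$ of a common degree $n_0$ that form a system of parameters of $G_{I(M)}(\mathscr{R}(M))$ with $x_1', \ldots, x_k' \in P$. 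Their lifts give a minimal reduction of $M^{n_0}$ by Lemma \ref{equ}, and $x_i' \bar{y} = 0$ translates to $x_i y \in I(M) M^{n+n_0}$ for $i \leq k$. The analogue of Theorem \ref{structure1} applied to $M^n$ then puts $y \in (M^n)_{[k]} = I(M)M^n$, i.e.\ $\bar{y} = 0$, contradicting $\Ann(\bar{y}) = P \neq G_{I(M)}(\mathscr{R}(M))$.

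For the converse $(\Leftarrow)$, assume every $P \in \Ass(G_{I(M)}(\mathscr{R}(M)))$ has $\height(P) < k$. Take $y \in (M^n)_{[k]}$; by Theorem \ref{structure1} there exist $n_0' \geq 1$ and a minimal reduction $x_1, \ldots, x_{d+p-1}$ of $(M^n)^{n_0'}$ such that $x_i y \in I(M) M^{n(n_0'+1)}$ for $i \leq k$. Because every associated prime has height strictly less than $k$, homogeneous prime avoidance permits replacing $x_1', \ldots, x_k'$ by homogeneous elements of the same degree that miss every associated prime of $G_{I(M)}(\mathscr{R}(M))$, while still forming part of a system of parameters; these then form a regular sequence on $G_{I(M)}(\mathscr{R}(M))$. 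The colon relations $x_i' \bar{y} = 0$ then force $\bar{y} = 0$, i.e.\ $y \in I(M) M^n$.

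The main obstacle will be the homogeneous prime-avoidance step, which has to be carried out uniformly across all $n \geq 1$ in the second direction, and carefully in the first so that the chosen parameters simultaneously lie in $P$ and form a minimal reduction of a power of $M$. Once this graded avoidance is set up, the translation between the colon description from Theorem \ref{structure1} and the annihilator of $\bar{y}$ in $G_{I(M)}(\mathscr{R}(M))$ is the routine bridge from the statement about the modules $(M^n)_{[k]}$ to the statement about associated primes.
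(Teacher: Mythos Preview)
Your forward direction is essentially the paper's argument, but you omit a small technical step: the homogeneous system of parameters $x_1',\dots,x_{d+p-1}'$ you obtain lives in degree $n_0$, while to invoke Theorem~\ref{structure1} for $M^n$ you need a minimal reduction of $(M^n)^{n_0'}$ for some $n_0'$. The paper fixes this by replacing each $z_i$ by $z_i^{\,n}$, so the new elements lie in degree $nn_0$ and generate a minimal reduction of $(M^n)^{n_0}$; you should insert the analogous adjustment.

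The converse direction has a genuine gap. You start from $y\in (M^n)_{[k]}$, extract via Theorem~\ref{structure1} a \emph{specific} minimal reduction $x_1,\dots,x_{d+p-1}$ with $x_i'\bar y=0$ for $i\le k$, and then propose to \emph{replace} $x_1',\dots,x_k'$ by elements of the same degree avoiding all associated primes. But once you replace, the relations $x_i'\bar y=0$ are lost, so nothing forces $\bar y=0$. The paper's route avoids this: it takes an \emph{arbitrary} minimal reduction of $M^{n_0}$, observes (using Theorem~\ref{equim}) that the images $x_1',\dots,x_k'$ are part of a system of parameters in the equidimensional ring $G_{I(M)}(\mathscr{R}(M))$, so $\height(x_1',\dots,x_k')=k$. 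Since every associated prime has height $<k$, the ideal $(x_1',\dots,x_k')$ is contained in no associated prime and therefore contains a non-zero-divisor $c=\sum a_i x_i'$. Then $c\bar y=\sum a_i x_i'\bar y=0$ gives $\bar y=0$ directly. No replacement is needed, and no regular-sequence statement is required---a single non-zero-divisor in the ideal suffices. Because this works for every choice of minimal reduction and every $n_0$, the union in Theorem~\ref{structure1} collapses to $I(M)M^n$.
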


\begin{proof}
Assume that $I(M)M^n = (M^n)_{[k]}$ for every $n\geq 1$ and take $P \in \text{Ass}(G_{I(M)}(\mathscr{R}(M)))$. Because $P$ is a graded ideal, we may write $P=(0':b')$ with $b \in M^{n} \hspace{0.05cm}\backslash \hspace{0.05cm} I(M)M^{n}$. Now suppose that $\text{ht}(P)\geq k$; then $\dim(G_{I(M)}(\mathscr{R}(M))/P)\leq (d+p-1) - k$. By \cite[Lemma 2(E)]{shah} there exist $z_1',\dots,z_{d+p-1}'\in G_{I(M)}(\mathscr{R}(M))$, with $z_i':=z_i+I(M)M^{n_0}$ for some $n_0\geq  1$ and every $i\in \{1,\dots,d+p-1\}$, such that
$$
\ell_{R}\left(\frac{G_{I(M)}(\mathscr{R}(M))}{\left(z_1',\ldots,z_{d+p-1}'\right)}\right)<\infty \text{ and } z_1',\ldots,z_k'\in P.
$$
Define $x_i=z_i^n$ for each $i$. So
$$
\ell_{R}\left(\frac{G_{I(M)}(\mathscr{R}(M))}{\left(x_1',\ldots,x_{d+p-1}'\right)}\right)<\infty \text{ and } x_1',\ldots,x_k'\in P,
$$
where $x_i':=x_i+I(M)M^{nn_0}$. One concludes that $b(x_1,\dots,x_k)\subset I(M)M^{n+nn_0}=I(M)(M^n)^{n_0+1}.$
By Theorem \ref{structure1} we obtain
$$
b\in M^n \cap (I(M)(M^n)^{n_0+1}:x_1,\dots,x_k)\subseteq (M^n)_{[k]},
$$
which is a contradiction.

Now, assume that $\text{ht}(P) <k$ for every $P\in \text{Ass}(G_{I(M)}(\mathscr{R}(M)))$ and fix $k\in \{1,\dots,d+p-1\}$. Let $n,n_0$ be positive integers and let $x_1,\dots,x_{d+p-1}$ be any minimal reduction of $M^{n_0}$. We must show that $I(M)M^n = (M^n)_{[k]}$. To do so, by Theorem \ref{structure1}, it suffices to show that
\begin{equation}\label{ig}
I(M)M^n = M^n \cap (I(M)(M^n)^{n_0+1}:x_1,\dots,x_k).
\end{equation}
In fact, letting $x_i':=x_i+I(M)M^{n_0}$ note that $\text{ht}(x_1',\dots,x_k')=k$ since $G_{I(M)}(\mathscr{R}(M))$ is equidimensional (by Theorem \ref{equim}). In this way, the ideal $(x_1',\dots,x_k')$ must contain a nonzero divisor. Take $y\in M^n \cap (I(M)(M^n)^{n_0+1}:x_1,\dots,x_k)$; then $y'x_i'=0'$ for each $i\in \{1,\dots,k\}$, where $y':=y+I(M)M^{n}$ and $x_i':=x_i+I(M)M^{nn_0}$. From this we derive \ref{ig}, as wished. 
\end{proof}

Next corollary improves \cite[Theorem 4]{shah}.

\begin{corollary}
Let $(R,\mathfrak{m})$ be a formally equidimensional $d$-dimensional local ring with infinite residue field and $d\geq 1$, let $I$ be an $\mathfrak{m}$-primary ideal of $R$ and let $k\in \{1,\dots,d\}$. Then the following are equivalents:
\begin{enumerate}
    \item $I^{n+1} = (I^n)_{[k]}$ for every $n\geq 1$; \vspace{0.2cm}
    \item $I^{n} = (I^n)_{k}$ for every $n\geq 1$; \vspace{0.2cm}
    \item ${\rm ht}(P)<k$ for every $P\in {\rm Ass}(G_I(R)).$
\end{enumerate}
\end{corollary}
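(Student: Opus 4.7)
The plan is to split the three-way equivalence as (1) $\Leftrightarrow$ (3) and (2) $\Leftrightarrow$ (3), deriving each from a result already in hand. First I would record the specialization: take $F=R$, $p=1$, and work with the submodule $M=I$ (respectively $M=I^n$). Since $I$ is $\mathfrak{m}$-primary one has $s(I)=d=d+p-1$, the Fitting ideal is $I(M)=\mathrm{Fitt}_0(R/I)=I$, so that $G_{I(M)}(\mathscr{R}(M))=G_I(R)$, $I(M)M^n=I\cdot I^n=I^{n+1}$, and $M^{sat}=R$ (whence $q(I^n)=\overline{I^n}$).

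For (1) $\Leftrightarrow$ (3) I would then apply Theorem~\ref{height} directly with $M=I$. Under the identifications above, the statement ``$I(M)M^n=(M^n)_{[k]}$ for every $n\geq 1$'' becomes ``$I^{n+1}=(I^n)_{[k]}$ for every $n\geq 1$'', while the associated-prime condition is stated on $G_{I(M)}(\mathscr{R}(M))=G_I(R)$; this is exactly the equivalence (1) $\Leftrightarrow$ (3).

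For (2) $\Leftrightarrow$ (3) I would observe that since $(I^n)^{sat}=R$, the coefficient module $(I^n)_k$ of Theorem~\ref{teoralgcoefarb} coincides with Shah's coefficient ideal of $I^n$; the equivalence is then precisely \cite[Theorem 4]{shah}. Alternatively, one can reprove it along the lines of Theorem~\ref{height}, using the colon description $(I^n)_k=(I^{n(n_0+1)}:x_1,\dots,x_k)$ from Theorem~\ref{teorcoefestruturafinaparmod}. In the direction (2) $\Rightarrow$ (3), assume there is some $P\in\mathrm{Ass}(G_I(R))$ with $\mathrm{ht}(P)\geq k$, write $P=(0:b^*)$ with $b\in I^m\setminus I^{m+1}$, and apply \cite[Lemma 2(E)]{shah} to find homogeneous parameters $z_1^*,\dots,z_d^*$ of a common degree $n_0$ with $z_1^*,\dots,z_k^*\in P$; lifting, the $z_i$ form a minimal reduction of $I^{n_0}$ and $bz_i\in I^{m+n_0+1}$, placing $b$ in a colon ideal that sits inside $(I^m)_k$ and contradicting $(I^m)_k=I^m$. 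For (3) $\Rightarrow$ (2), given $y\in(I^n)_k$ assume by contradiction $y\in I^m\setminus I^{m+1}$ with $m<n$; then $y^*x_i^*=0$ in $G_I(R)$, and since $x_1,\dots,x_d$ is a minimal reduction of $I^{nn_0}$ the leading forms $x_1^*,\dots,x_d^*$ behave like a system of parameters (with $G_I(R)$ equidimensional by Theorem~\ref{equim}), forcing $(0:y^*)$ to be contained in an associated prime of height $\geq k$.

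The main obstacle I anticipate in the alternative direct proof of (2) $\Leftrightarrow$ (3) is organizational: the parameters produced by \cite[Lemma 2(E)]{shah} have an intrinsic degree $n_0$, whereas Theorem~\ref{teorcoefestruturafinaparmod} applied to $M=I^n$ demands a minimal reduction of $I^{nn_0}$, so one must pass to appropriate powers (the trick ``$x_i=z_i^n$'' used in the proof of Theorem~\ref{height}) to align the colon ideals with the associated prime under consideration. Once this bookkeeping is in place, the argument transfers verbatim from the proof of Theorem~\ref{height}, and the three conditions are mutually equivalent.
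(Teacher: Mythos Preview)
Your proposal is correct and matches the paper's own proof exactly: the paper simply states that the corollary follows from Theorem~\ref{height} (giving (1) $\Leftrightarrow$ (3)) and \cite[Theorem~4]{shah} (giving (2) $\Leftrightarrow$ (3)), precisely the split you describe after making the specialization $F=R$, $M=I$. Your alternative direct argument for (2) $\Leftrightarrow$ (3) is unnecessary here but is a faithful adaptation of the method of Theorem~\ref{height}.
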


\begin{proof}
It follows from Theorem \ref{height} and \cite[Theorem 4]{shah}.    
\end{proof}

\vspace{0.2cm}

\end{document}